\newcommand{\Z}{\mathbb{Z}}
\newcommand{\N}{\mathbb{N}}
\newcommand{\ff}{\mathbb{F}}
\newcommand{\Tr}{\operatorname{Tr}}
\newcommand{\CC}{\mathcal{C}}
\newcommand{\Cay}{\mathrm{Cay}}
\newcommand{\G}{\Gamma}
\newcommand{\sk}{\smallskip}
\newcommand{\msk}{\medskip}
\newtheorem{thm}{Theorem}[section]
\newtheorem{prop}[thm]{Proposition}
\newtheorem{lem}[thm]{Lemma}
\newtheorem{coro}[thm]{Corollary}
\theoremstyle{definition}
\newtheorem{rem}[thm]{Remark}
\newtheorem{exam}[thm]{Example}
\theoremstyle{remark}
\def\black{\color{black}}
\begin{document} \sloppy
\numberwithin{equation}{section}
\title[A reduction formula for Waring numbers]{A reduction formula for Waring numbers \\ 
	through generalized Paley graphs}
\author{Ricardo A.\@ Podest\'a, Denis E.\@ Videla}
\dedicatory{\today}
\keywords{Waring number, finite fields, generalized Paley graphs, cyclic codes}
\thanks{2020 {\it Math.\@ Subject Class.\@} Primary 11P05;\, Secondary 05C25, 05C12, 11A07.}
\thanks{Partially supported by CONICET, FONCyT and SECyT-UNC}

\address{Ricardo A.\@ Podest\'a. FaMAF -- CIEM (CONICET), Universidad Nacional de C\'ordoba. 
	Av.\@ Medina Allende 2144, Ciudad Universitaria, (5000) C\'ordoba, Rep\'ublica Argentina. 
	{\it E-mail: podesta@famaf.unc.edu.ar}}
\address{Denis E.\@ Videla. FaMAF -- CIEM (CONICET), Universidad Nacional de C\'ordoba. 
	Av.\@ Medina Allende 2144, Ciudad Universitaria, (5000) C\' ordoba, Rep\'ublica Argentina. 
	{\it E-mail: devidela@famaf.unc.edu.ar}}

\begin{abstract}
We give a reduction formula for the Waring number $g(k,q)$ over a finite field $\mathbb{F}_q$. 
By exploiting the relation between $g(k,q)$ with the diameter of the generalized Paley graph $\G(k,q)$ and by using the characterization due to Pearce and Praeger (2019) of those $\G(k,q)$ which are Cartesian decomposable, we obtain the reduction formula 
	$$g(\tfrac{p^{ab}-1}{bc},p^{ab}) = b g(\tfrac{p^a-1}{c},p^a)$$
for $p$ prime and $a,b,c$ positive integers under certain arithmetic conditions. 
Then, we find some arithmetic conditions 
to apply the formula above, which allow us to obtain many infinite families of explicit values of Waring numbers.
Finally, we use the reduction formula 
together with the characterization of $2$-weight irreducible cyclic codes 
due to Schmidt and White (2002) to find infinite families of explicit even values of $g(k,q)$. 
\end{abstract}

\maketitle

\section{Introduction}
This work is a natural continuation of \cite{PV3} where we begin the study of Waring numbers over finite fields by seeing them as the diameter of generalized Paley graphs. Here, we will obtain a general reduction formula for Waring numbers over finite fields  by using Cartesian decomposable generalized Paley graphs. Up to our best knowledge, there are no such kind of reduction formulas 
available in the literature.

\subsubsection*{Waring numbers}
The classical problem introduced by Waring as far back as 1770 deals with positive integers and asks whether, given any number $k$, there is a number $g(k)$ such that every positive integer can be expressed as a sum of at most $g(k)$ $k$-th powers. 
A similar problem can be considered in the context of finite fields. Given a finite field $\ff_q$ of $q$ elements and a positive integer $k$, the question is to decide if every element $x\in \ff_q$ can be written as a sum of $k$-th powers in the field, that is $x=x_1^k+\cdots+x_s^k$ 
for some $x_i\in \ff_q$ for $i=1,\ldots,s$. In this case, the Waring number $g(k,q)$ is defined to be the minimal value $s$ (if it exists) such that every element of $\ff_q$ is a sum of a number $s$ of $k$-th powers in $\ff_q$.  

First, notice that if $q=p^m$ we have 
\begin{equation} \label{existence}
g(k,q) \text{ exists } \qquad \Leftrightarrow \qquad \tfrac{p^m-1}{p^d-1} \nmid k \quad 
\text{for every $d\mid m$ with $d\ne m$.}
\end{equation}
Since $x^q=x$ for every element $x \in \ff_q$, we trivially have that $g(1,q)=g(q,q)=1$ and it is only necessary to consider $2 \le k \le q-1$. 
Also, it is not difficult to see that 
$g(k,q)=g(\gcd(k,q-1),q)$ and, thus, we will always assume that $k\mid q-1$ (if $(k,q-1)=1$ then $g(k,q)=g(1,q)=1$).

The study of Waring numbers in finite fields was already mentioned by Hardy and Littlewood in 1928 (\cite{HL}, see also \cite{To}) and can be probably traced back to Cauchy. 
The modern study of Waring numbers was probably initiated by Dodson and Tietäväinen (\cite{Do}, \cite{DT}, \cite{Ti}, 1968--1976) and Small (\cite{Sm}, 1977) in the seventies. 
After them, several authors studied these numbers; see for instance the works of Cipra, Cochrane and Pinner (\cite{Ci}, \cite{CiCP}, \cite{CP}), Garc\'ia-Sol\'e (\cite{GS}), Glibichuk et al.\@ (\cite{Gl}, \cite{GlR}), Kononen (\cite{KK}), Konyagin (\cite{Kon}), Moreno-Castro (\cite{MC1}, \cite{MC}), Podest\'a-Videla (\cite{PV3}) and Winterhof et al.\@ (\cite{Win}, \cite{W}, \cite{Win2}). 
In these works several (mainly upper) bounds for the numbers $g(k,q)$ and some (few) exact values are given. Brief compilations of these results can be found in \S 7.3.4 of the handbook \cite{MP} or in  \S 2 of \cite{PV3}.

\subsubsection*{Generalized Paley graphs}
For a graph $\G$ we will denote by $V(\G)$ and $E(\G)$ its sets of vertices and edges, respectively. 
An (undirected) edge from $x$ to $y$  will be denoted by $xy$ while a directed edge (or arc) from $x$ to $y$ by $\vec{xy}$.
If $G$ is a group and $S\subset G$, the Cayley graph $\G=\Cay(G,S)$ is the directed graph (digraph) with vertex set $G$ and $\vec{xy} \in E(\G)$ if and only if $y-x \in S$.
As usual, if both $x-y$ and $y-x$ are in $S$, then the two arcs $\vec{xy}$ and $\vec{yx}$ are considered as a single undirected edge $xy$.

Let $q=p^m$ with $p$ a prime number and $k$ a non-negative integer with $k\mid q-1$, 
the \textit{generalized Paley graph} (\textit{GP-graph} for short) is the Cayley graph
\begin{equation} \label{Gammas}
\G(k,q) = \Cay(\ff_{q},R_{k}) \quad \text{with } \quad R_{k} = \{ x^{k} : x \in \ff_{q}^*\}.
\end{equation} 
That is, $\G(k,q)$ is the directed graph with vertex set $\ff_{q}$ and two vertices $u,v \in \ff_{q}$ form an arc 
from $u$ to $v$ if and only if 
$v-u=x^k$ for some $x\in \ff_{q}^*$. 
Since we exclude $0$ from the vertex set, there are no loops in these graphs.
Notice that $\G(k,q)$ is an $n$-regular graph with $n=\tfrac{q-1}k$. 
The graph $\G(k,q)$ is undirected either if $q$ is even or if $k$ divides $\tfrac{q-1}2$ when $p$ is odd (equivalently if $n$ is even when $p$ is odd) and it is connected if $n$ is a primitive divisor (see \eqref{dagger}) of $q-1$.  
When $k=1$ we get the complete graph $\G(1,q)=K_q$ and when $k=2$ we get the classic Paley graph $\Gamma(2,q) = P(q)$. 

If the GP-graph $\G=\G(k,q)$ is connected (not necessarily undirected), 
the diameter of $\G$ is the Waring number of the pair $(k,q)$, that is we have 
(see Theorem 3.3 in \cite{PV3}, see also \cite{GS} for $q=p$ prime)
\begin{equation} \label{g=d} 
g(k,q) = \delta(\G(k,q)),
\end{equation}
where $\delta(\G)$ is the diameter of $\G$.

Generalized Paley graphs have been extensively studied in the few past years. 
Lim and Praeger studied their automorphism groups and characterized all GP-graphs which are Hamming graphs 
(\cite{LP}). 
Also, Pearce and Praeger characterized all GP-graphs which are cartesian decomposable (\cite{PP}).
Both classic Paley graphs and generalized Paley graphs have been used to find linear codes with good decoding properties (\cite{GK}, \cite{KL}, \cite{SL}). 
They can also be seen as particular regular maps in Riemann surfaces 
(\cite{J}, \cite{JW}). 
The number of walks in GP-graphs are related with the number of solutions of 
diagonal equations over finite fields (\cite{V}). 
Under some mild restrictions, the spectrum of GP-graphs determines the weight distribution of their associated irreducible
codes (\cite{PV2}, \cite{PV4}).

\subsubsection*{Outline and results}
In Section 2 we consider Cartesian decomposable GP-graphs and obtain general reduction formulas for Waring numbers of the form $g(k,p^{ab})=bg(u,p^a)$ with $p$ prime. In Sections 3 and 5 we get more explicit expressions for the reduction formula depending whether $b$ is a prime, a prime power or an arbitrary integer. In Sections 6 and 7 we study even Waring numbers in more detail.

Now, we summarize the results of the paper more precisely.
In Section~2, we consider GP-graphs $\G(k,q)$ which are Cartesian decomposable. 
To get a full result we have to consider both directed and undirected graphs, treating these cases separately.  
In Proposition~\ref{dir prod PP} we extend the Pearce and Praeger's characterization of undirected Cartesian decomposable GP-graphs in \cite{PP} to directed Cartesian decomposable GP-graphs. By using these characterizations and \eqref{g=d}, in Theorem \ref{waringprod main} we get a reduction formula for the associated Waring numbers $g(k,q)$ where $q=p^{ab}$ with $p$ prime. 
Namely,  if $a,b,c$ are integers such that $c$ and $bc$ primitive  divisors of $p^a-1$ and $p^{ab}-1$ respectively, we get 
\begin{equation} \label{red fla}
g(\tfrac{p^{ab}-1}{bc},p^{ab}) = b  g(\tfrac{p^a-1}{c}, p^{a}).
\end{equation}

In Section 3, we consider the particular case that $b$ is prime. The general case is treated in Proposition~\ref{prod primecase}) were the arithmetic conditions on $a,b,c$ for the validity of the reduction formula are easier. 
As a consequence, we obtain several particular instances of reduction formula for Waring numbers in Corollaries \ref{coro2}, \ref{coro1}, \ref{coro3}, \ref{coro4} and \ref{coro5}; as well as some general examples (Examples \ref{otro ejemplo} and \ref{g=4}).

In the next section, we study the divisibility conditions in Theorem \ref{waringprod main} in terms of divisibility 
properties of the polynomial $\Psi_b (x) = x^{b-1} + \cdots + x +1$.
This section is in some sense elementary, but somewhat technical and can be skipped at first reading. Here we provide the arithmetic tools to study reduction formulas in the case of more general integers $b$.
In Section 5, we apply these divisibility conditions to provide more explicit reduction formulas for $g(k,q)$. 
We consider the case of $b=r^t$ a prime power in Proposition~\ref{red pow} and the case of $b=2^t s$ with $s$ odd in Theorem~\ref{teo 42}. These results together cover all possible cases of $b\in \N$.
As an a priori unexpected consequence, we then reobtain Kononen's expression \eqref{kononen1} and \eqref{kononen2}, with $m > 1$, as a special case of our reduction formulas and the basis case $m=1$ previously proved by Winterhof and van de Woestijne \cite{Win2}.

Finally, in the last two sections we study even Waring numbers in more detail. To this end, we use a relation between GP-graphs and 2-weight irreducible cyclic codes. 
First, in Section 6 we focus on Waring numbers equal to $2$. 
Among the known cases of $g(k,q)=2$ from other works (see List 1 at the beginning of Section 6) we provide new pairs $(k,q)$ such that $g(k,q)=2$. 
To do this, we consider strongly regular GP-graphs $\Gamma(k,q)$. These graphs are determined by the spectra of 2-weight irreducible cyclic codes $\mathcal{C}(k,q)$ satisfying $k\mid \frac{q-1}{p-1}$ (see \cite{PV2}). In \cite{SW}, Schmidt and White conjectured that there are only three disjoint families of this kind of codes (subfield subcodes, semiprimitive and exceptional) and therefore three associated disjoint families of strongly regular GP-graphs.
Since connected strongly regular GP-graphs have diameter $2$ we get that $g(k,q)=2$ for those $k$ and $q$ corresponding to these graphs (see Proposition \ref{Waringsrg}). In Theorem \ref{srg waring} we proved that for every pair $(k,q)$ associated to a semiprimitive or exceptional code $\mathcal{C}(k,q)$ we have $g(k,q)=2$. By identifying a family of semiprimitive pairs $(k,q)$ in Proposition \ref{otra prop} we obtain an infinite number of Waring numbers equal to 2 which cannot be obtained by Small condition (see List 1).  

Secondly, in Section 7, we first recollect known cases of even Waring numbers (see Lists~2 and 3). Then, we consider decomposable GP-graphs $\G(k,p^{ab}) = \square^b \G(u,p^b)$ where $(u,p^b)$ is either a semiprimitive pair or $2\le k\le q^{\frac a4}+1$. Thus, 
by combining the reduction formula \eqref{red fla} with the fact that $g(u,p^b)=2$ in this cases, we get a general result for even Waring numbers in Proposition~\ref{teo par}. As a consequence, a formula of the form 
$g(\tfrac{p^{ab}-1}{bc},p^{ab})=2b$ is obtained, for some integers $a,b,c$ under proper easy arithmetic conditions.
We end this work with a compilation of Waring numbers equal to 4 (see List 4), a finite number from known results, and infinite families obtained from the results in the section.

\section{A reduction formula for Waring numbers}
By studying the diameter of Cartesian decomposable GP-graphs, we will obtain a general reduction formula for Waring numbers, expressing the value $g(k_b, q^{b})$ as $bg(k,q)$ where $q=p^a$ with $p$ prime and $k<k_b$, for certain integers $a,b,k,k_b$ satisfying arithmetic conditions. We will have to consider directed and undirected graphs separately in order the get the full result.

The Cartesian product of undirected graphs $\Gamma_1,\ldots,\Gamma_b$, denoted as 
$\Gamma_1 \, \square \cdots \square \, \Gamma_b$,  
is the graph $\Gamma$ with vertex set $V(\Gamma) = V(\Gamma_1)\times\cdots\times V(\Gamma_b)$, such that if $v=(v_1,\ldots,v_{b})$, $w=(w_1,\ldots,w_b) \in V(\G)$ then $v$ and $w$ form an edge in $\G$ if and only if there exists only one $j\in\{1,\ldots,b\}$ such that $v_jw_j$ is an edge in $\Gamma_j$ and $v_i=w_i$ for all $i\neq j$. 
An undirected graph $\Gamma$ is called \textit{Cartesian decomposable} if it can be decomposed as 
	$$\G= \Gamma_1 \, \square \cdots \square \, \Gamma_b$$ 
for some undirected graphs $\Gamma_1,\ldots,\Gamma_b$ where $b>1$ and each $\G_i$ has at least two vertices. 
Similarly, if $\Gamma_1,\ldots,\Gamma_b$ are directed graphs one can define their directed Cartesian product, denoted 
	$$\Gamma_1 \, \vec{\square} \cdots \vec{\square} \, \Gamma_b,$$
simply by replacing the word \textit{edge} by the word \textit{arc} 
in the definition of undirected Cartesian product.
A directed graph $\Gamma$ is called Cartesian decomposable if it can be decomposed as $\G= \Gamma_1 \, \vec{\square} \cdots \vec{\square} \, \Gamma_b$ for some $b$ and some directed graphs $\Gamma_1,\ldots,\Gamma_b$.

Let $\Gamma$ be an undirected GP-graph which is Cartesian decomposable.
By the characterization given by Pearce and Praeger in 2019, $\Gamma$ is the Cartesian product of copies of a single GP-graph. 
More precisely, if $\Gamma$ is undirected and connected the following conditions are equivalent (see \cite{PP}): 
\vspace{1mm}
\begin{equation} \label{cond desc}
\begin{split}
& (a) \:\: \Gamma = \G(k,p^m) \text{ is Cartesian decomposable}. \\[1mm]
& (b) \:\: \tfrac{p^m -1}{k}=bc \text{ with $b>1$, $b\mid m$ and $c$ is a primitive divisor of $p^{\frac{m}{b}}-1$}. \\[1mm]
& (c) \:\: \Gamma \cong \square^{b} \Gamma_0, \text{ where $\Gamma_{0}= \G(\tfrac{p^{\frac mb}-1}{c}, p^{\frac mb})$ for  $b,c$ as in $(b)$}.
\end{split}
\end{equation}

\noindent Recall that an integer $e$ is a \textit{primitive divisor} of $p^a-1$ if, by definition, $e \mid p^a-1$ and $e \nmid p^t-1$ for any $t<a$. 
For simplicity, we denote this fact by 
\begin{equation} \label{dagger} 
e \dagger p^a-1.
\end{equation}
Notice that we are using a different notation than Pearce and Praeger. In their work, Pearce and Praeger use $d$ instead of our $k$ and $k$ instead of our $\frac{p^m-1}{k}$.

The paradigm of Cartesian decomposable graphs is given by the Hamming graphs $H(b,q)$. If $V$ is any set with $q$ elements, $H(b,q)$ has vertex set $V^b= V\times \cdots \times V$ ($b$-times) and two vertices form and edge if they are at Hamming distance one (i.e., they differ in a single coordinate). Note that $H(b,q)$ has size $q^b$ and is $b(q-1)$-regular.
Basic examples of Hamming graphs are: the generalized quadrangle $GQ(2,1)=H(2,3)$, the lattice graph $L_{q,q}=H(2,q)$ and the hypercubes $Q_d=H(d,2)$.   
It is easy to see from the definition that Hamming graphs are Cartesian products of the same complete graph, namely 
	$$H(b,q)=\square^b K_q.$$ 

GP-graphs $\G$ which are Hamming graphs are considered in \cite{LP} and \cite{PV3}. 
In this case, if $\G$ is a Hamming graph then 
$$\G=\G(k,q^b)=H(b,q)$$ 
with $q=p^{a}$. 
Then $\G= \square^b \Gamma_0$ where $\G_0=\G(1,p^a)=K_{p^a}$ is the complete graph (i.e., corresponds to the case $u=1$ in \eqref{cond desc}). 
Equating the regularity degrees of $\G$ and $\square^b \G_0$ we have that
	$$\tfrac{q^b-1}{k}=b(q-1).$$ 
Hence, in particular, both $k$ and $b$ divide $\tfrac{q^b-1}{q-1}= q^{b-1}+ \cdots + q+1$.

We now give our first version of the reduction formula for Waring numbers, using undirected GP-graphs.
\begin{prop} \label{waringprod}
Let $p$ be a prime and let $a, b, c \in \N$ such that $c \dagger p^a-1$ 
and $bc \dagger p^{ab}-1$. If $p$ is even or else if $p$ is odd and $bc$ is even then we have 
\begin{equation} \label{gk=bgu}
g(\tfrac{p^{ab}-1}{bc},p^{ab}) = b  g(\tfrac{p^a-1}{c}, p^{a}).
\end{equation} 
\end{prop}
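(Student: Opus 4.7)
The plan is to combine the Pearce--Praeger characterization of Cartesian decomposable GP-graphs recorded in \eqref{cond desc}, the identification \eqref{g=d} of the Waring number with the diameter, and the standard fact that the diameter is additive under Cartesian products of connected graphs. In the regime where the factor $\G_0$ is also undirected, this is essentially a four-line argument; the only real subtlety lies in making the parity conditions do their job.

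First I would set $m=ab$ and $k:=\tfrac{p^{ab}-1}{bc}$, so that $\tfrac{p^{m}-1}{k}=bc$ with $b>1$ and $b\mid m$. The hypothesis $c\dagger p^{a}-1$ (with $a=m/b$) is then exactly condition $(b)$ of \eqref{cond desc}. The extra primitivity assumption $bc\dagger p^{ab}-1$ guarantees that $\G:=\G(k,p^{ab})$ is connected, since its valency $n=bc$ is a primitive divisor of $p^{ab}-1$, and the parity hypothesis (either $p=2$, or $bc$ even when $p$ is odd) ensures that $n$ corresponds to an undirected GP-graph. Hence \eqref{cond desc} applies and gives
\[
\G \,\cong\, \square^{b}\,\G_{0}, \qquad \G_{0}:=\G(u,p^{a}),\quad u=\tfrac{p^{a}-1}{c}.
\]
Moreover, $\G_{0}$ is connected since its valency equals $c$ and $c\dagger p^{a}-1$.

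Next I would invoke the classical identity $\delta(\Gamma_{1}\,\square\,\Gamma_{2})=\delta(\Gamma_{1})+\delta(\Gamma_{2})$ for Cartesian products, which iterated yields $\delta(\square^{b}\G_{0})=b\,\delta(\G_{0})$. Finally \eqref{g=d}, applied on both sides (permissible because both $\G$ and $\G_{0}$ are connected), converts the diameters into Waring numbers:
\[
g\bigl(\tfrac{p^{ab}-1}{bc},p^{ab}\bigr)=\delta(\G)=b\,\delta(\G_{0})=b\,g\bigl(\tfrac{p^{a}-1}{c},p^{a}\bigr),
\]
which is exactly \eqref{gk=bgu}.

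The point I expect to require most care is the subcase in which $p$ is odd, $bc$ is even, but $c$ is odd (so necessarily $b$ is even): then $\G$ is undirected while $\G_{0}$ has odd valency $c$ and is a priori a proper digraph, so the undirected Pearce--Praeger statement \eqref{cond desc} does not apply verbatim to $\G_{0}$. To handle this case I would appeal to the directed analogue (Proposition \ref{dir prod PP} in the outline) to write $\G=\vec{\square}^{b}\G_{0}$, and then use the fact that the diameter of a directed Cartesian product of strongly connected digraphs is still the sum of the diameters of its factors. Since $g(u,p^{a})$ is by definition the diameter of $\G_{0}$ whether $\G_{0}$ is directed or not, \eqref{g=d} still yields the desired reduction formula.
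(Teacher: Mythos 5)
Your main line of argument is exactly the paper's: identify the hypotheses with condition $(b)$ of \eqref{cond desc}, apply the Pearce--Praeger characterization to get $\G\cong\square^b\G_0$ with $\G_0=\G(u,p^a)$, use additivity of the diameter over Cartesian products, and translate back via \eqref{g=d}. That part is correct and is essentially the proof in the paper.

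The one place you deviate is the final paragraph, and there your proposed fix does not work as stated. You worry about the subcase $p$ odd, $bc$ even, $c$ odd (so $b$ even), where $\G_0$ would have odd valency $c$ and hence be a proper digraph, and you propose to handle it by invoking the directed analogue, Proposition \ref{dir prod PP}. But that proposition's hypothesis is that $bc$ is \emph{odd}; its proof uses the oddness of $c$ precisely to guarantee $S_i\neq -S_i$, and it cannot be applied in the regime you are describing. The correct observation --- and the one the paper makes in a single sentence --- is that this subcase is vacuous: the Pearce--Praeger equivalence $(b)\Leftrightarrow(c)$ in \eqref{cond desc} is a statement about \emph{undirected} Cartesian decompositions, so the factor $\G_0=\G(u,p^a)$ it produces is an undirected graph; if $\G_0$ were directed, the product $\square^b\G_0$ would be directed, contradicting the fact that $\G$ is undirected. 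Hence $c$ is forced to be even whenever $p$ is odd and the hypotheses hold (this is confirmed arithmetically for $b=2$ and $b=2^t$ by Lemmas \ref{lem equiv n primit} and \ref{lem pow 2}). So your proof is salvageable by replacing the appeal to Proposition \ref{dir prod PP} with this one-line remark, but as written the treatment of the flagged subcase rests on a result whose hypotheses are not met.
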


\begin{proof}
First note that \eqref{gk=bgu} trivially holds for $b=1$. Now, if $b>1$, consider the graph 
$\G=\G(\tfrac{p^{ab}-1}{bc},p^{ab})$. Since $bc\dagger p^{ab}-1$ and $p$ is even or $p$ is odd and $bc$ is even, 
$\G$ is connected and undirected. 
Thus, by the characterization in \eqref{cond desc},
we have that $\Gamma = \square^{b} \Gamma_0$ with 
$$\Gamma_{0} = \G(\tfrac{p^{a}-1}{c},p^{a})$$ 
It is clear that $\G_0$ must be undirected also, for if not $\G$ would be directed by the very definition of the directed Cartesian product.

It is well-known (\cite[Proposition 5.1]{HIK}) that if $\Gamma = \Gamma_{1} \Box \cdots \Box \,\Gamma_b$ and $x,y\in V(\Gamma)$, then 
the distance between $x$ and $y$ is given by the sum of the distances between the projections of $x$ and $y$ to each $\G_i$. That is, 
$$ d_{\Gamma}(x,y) = \sum_{1\le i \le b} d_{\Gamma_i} (x_i,y_i),$$
where $x_i$ and $y_i$ denote the $i$-th coordinates of $x$ and $y$ respectively.
Thus, the diameter of $\Gamma$ is $b$ times the diameter of $\Gamma_0$, i.e. 
\begin{equation} \label{diam}
	\delta(\Gamma) = \delta(\square^{b}\Gamma_0) = b \delta(\Gamma_0).
\end{equation}

We saw that $\G$ is connected and since $c \dagger p^a-1$, the graph $\Gamma_0$ is also connected. 
Hence, the Waring numbers $g(\tfrac{p^{a}-1}{c},p^a)$ and $g(\tfrac{p^{ab}-1}{bc},p^{ab})$ both exist. Also, by \eqref{g=d},  $g(\tfrac{p^{a}-1}{c},p^a)$ and $g(\tfrac{p^{ab}-1}{bc},p^{ab})$ equal the diameter of $\Gamma(\tfrac{p^{a}-1}{c},p^a)$ and $\Gamma(\tfrac{p^{ab}-1}{bc},p^{ab})$, respectively. 

Therefore, from \eqref{diam} we get $g(\tfrac{p^{ab}-1}{bc},p^{ab}) = b g(\tfrac{p^{a}-1}{c}, p^{a})$, as desired. 
\end{proof}

We now focus on directed Cartesian products of graphs. This will allow to cover the remaining case $bc$ odd when $p$ is odd 
in the previous proposition. 

We begin by giving necessary conditions for a Caley graph to have a directed Cartesian decomposition 
(the directed equivalent of Lemma 2.4 in \cite{PP}).

\begin{lem} \label{lem Cay dig}
Let $G$ be a finite group such that $G=H_1\times\cdots \times H_b$ with $H_1, \ldots,H_b$ subgroups of $G$. 
Suppose there are subsets $S_i \subset H_i$ such that the identity $e_G \not \in S_i$ and 
$S_i\neq S_{i}^{-1}$ for each $1\le i\le b$.
If $\G=\Cay (G,S)$ with $S=\cup_{i=1}^b S_i$, then $\G \cong \vec{\square}_{i=1}^{\, b}\G_i$ where $\G_i=\Cay(H_i, S_i)$.
That is,  
	$$\Cay (H_1\times\cdots \times H_b, S_1 \cup \cdots \cup S_b) \cong 
		\Cay(H_1, S_1) \vec{\square} \cdots \vec{\square} \Cay(H_b, S_b).$$
\end{lem}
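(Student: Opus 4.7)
My plan is to verify the asserted isomorphism by taking the identity map on the underlying set $G = H_1 \times \cdots \times H_b$ and checking that it transports the arc set of $\Cay(G,S)$ onto the arc set of the directed Cartesian product $\vec{\square}_{i=1}^{\,b} \Cay(H_i, S_i)$. Both vertex sets equal $G$, so I only need to match arcs.

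The first key step is to show that the subsets $S_1, \ldots, S_b$, viewed inside $G$ via the canonical embeddings $H_i \hookrightarrow G$, are pairwise disjoint. This follows because the product $G = H_1 \times \cdots \times H_b$ is direct, so $H_i \cap H_j = \{e_G\}$ for $i \neq j$, and the hypothesis $e_G \notin S_i$ forces $S_i \cap S_j = \emptyset$. Consequently, if $yx^{-1} \in S$, then $yx^{-1} \in S_j$ for a uniquely determined index $j$.

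Next, writing $x = (x_1, \ldots, x_b)$ and $y = (y_1, \ldots, y_b)$, the condition $yx^{-1} \in S_j \subset H_j$ unpacks in the direct product as
\[
y_i x_i^{-1} = e_{H_i} \text{ for all } i \neq j, \qquad y_j x_j^{-1} \in S_j.
\]
The first part says that $x$ and $y$ agree in every coordinate except the $j$-th, and the second part says precisely that $\vec{x_j y_j}$ is an arc of $\Cay(H_j, S_j) = \Gamma_j$. By the definition of the directed Cartesian product, this is exactly the condition for $\vec{xy}$ to be an arc of $\vec{\square}_{i=1}^{\,b}\Gamma_i$. Reversing the argument shows the converse, so the identity map is a digraph isomorphism.

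This proof is essentially a bookkeeping exercise with definitions and I do not foresee a genuine obstacle; the only subtle point is the pairwise disjointness of the $S_i$'s, which crucially uses both that the decomposition is direct and that $e_G \notin S_i$. The hypothesis $S_i \neq S_i^{-1}$ is not needed for the isomorphism itself, since the arc comparison above never invokes symmetry of the connection sets; rather it serves to guarantee that each factor $\Gamma_i$ (and hence $\Gamma$) is genuinely a digraph rather than collapsing to an undirected graph, which is the point of stating a directed counterpart to Lemma 2.4 of \cite{PP} in the first place.
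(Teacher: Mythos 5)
Your proof is correct and follows essentially the same route as the paper's: both reduce the claim to matching arcs under the identity map on $G$ and unpack the condition "$g^{-1}h \in S_j \subset H_j$" coordinatewise using the direct product structure. Your explicit verification that the $S_i$ are pairwise disjoint (via $H_i \cap H_j = \{e_G\}$ and $e_G \notin S_i$), which guarantees the uniqueness of the index $j$ required by the Cartesian product definition, is a point the paper passes over more tersely, and your observation about the role of $S_i \neq S_i^{-1}$ matches the paper's own remark that this hypothesis only serves to keep each factor a genuine digraph.
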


\begin{proof}
For each $i=1,\ldots, b$ consider the graph $\G_i=\Cay(H_i,S_i)$. 
Conditions $e_G \notin S_i$ and $S_i\ne S_{i}^{-1}$ ensure that $\G_i$ is a directed graph without loops for all $i=1,\ldots,b$.

Since $G = H_1\times\cdots \times H_b$, the vertex set of $\G$ is  
$V(\G) = V(\G_1)\times \cdots \times V(\G_b)$.
Let $g = (g_1,\ldots,g_b)$, $h = (h_1,\ldots ,h_b)\in G$. 
By the definition of $\Cay(G,S)$, $\vec{gh} \in E(\G)$ if and only if $g^{-1}h \in S$, 
and this happens if and only if $ g^{-1}h\in S_j$ for some $j$, since $S=S_1 \cup \cdots \cup S_b$.
The last condition is equivalent to 
$g_{j}^{-1}h_j \in S_j$ and $g_i= h_i$ for all $i\ne j$. 
Finally this holds if and only if $\vec{gh}$ is an arc in 
$\vec{\square}_{i=1}^{\, b} \G_i$, by the definition of the
directed Cartesian product. But this shows that $\G=\vec{\square}_{i=1}^{\, b} \G_i$ as we wanted to see.
\end{proof}

Now, applying the previous lemma for directed generalized Paley graphs, we get arithmetic conditions for directed Cartesian decomposability.

\begin{prop} \label{dir prod PP}
Let $p$ be an odd prime and let $a, b, c$ be positive integers such that $c \dagger p^a-1$ and $bc \dagger p^{ab}-1$. 
If $bc$ is odd then  
	$$\G(\tfrac{p^{ab}-1}{bc}, p^{ab}) = \vec{\square}^{\, b} \G(\tfrac{p^a-1}{c},p^a).$$
\end{prop}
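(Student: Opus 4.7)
The plan is to apply Lemma \ref{lem Cay dig} to the additive group $G = (\ff_{p^{ab}}, +)$. Fix a primitive element $\alpha$ of $\ff_{p^{ab}}^*$ and set $q = p^a$, $k = (p^{ab}-1)/(bc)$, $u = (q-1)/c$. The proposed direct-product decomposition uses the subgroups $H_i := \alpha^{ik}\ff_q$ together with $S_i := R_k \cap H_i$, for $0 \le i < b$, where $R_k = \langle \alpha^k \rangle$ has order $bc$.

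First I would establish that $\{1, \alpha^k, \ldots, \alpha^{(b-1)k}\}$ is an $\ff_q$-basis of $\ff_{p^{ab}}$. Since $\alpha^k$ has multiplicative order $bc$, it lies in $\ff_{p^d}$ if and only if $bc \mid p^d - 1$; the primitive-divisor hypothesis $bc \dagger p^{ab}-1$ rules this out for every $d < ab$, so $\ff_q(\alpha^k) = \ff_{p^{ab}}$ and $\alpha^k$ has degree $b$ over $\ff_q$. Hence the $H_i$'s are $\ff_q$-lines in direct sum $\ff_{p^{ab}} = H_0 \oplus \cdots \oplus H_{b-1}$, each isomorphic as an abelian group to $(\ff_q, +)$.

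Next I would check that $R_k = \bigsqcup_i S_i$ with $S_i = \alpha^{ik} R_u$. For any exponent $j = b\ell + r$ with $0 \le r < b$, one has $\alpha^{jk} = (\alpha^{bk})^\ell \alpha^{rk}$; because $bk = (p^{ab}-1)/c$ is a multiple of $(p^{ab}-1)/(q-1)$ (using $c \mid q-1$ from $c \dagger p^a-1$), the power $\alpha^{bk}$ lies in $\ff_q^*$ and, being of order $c$, generates the unique order-$c$ subgroup $R_u \subset \ff_q^*$. Thus $\alpha^{jk} \in \alpha^{rk} R_u \subset H_r$, and the direct-sum property then pins $\alpha^{jk}$ to a single $H_r$. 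The $\ff_q$-linear bijection $H_i \to \ff_q$ sending $\alpha^{ik}x \mapsto x$ maps $S_i$ onto $R_u$, identifying $\Cay(H_i, S_i)$ with $\G(u, p^a)$.

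Finally, to invoke Lemma \ref{lem Cay dig} I must verify $0 \notin S_i$ (obvious) and $S_i \ne -S_i$; here the parity assumption enters. Since $bc$ is odd so is $c$, so the unique order-$2$ element $-1 \in \ff_q^*$ does not belong to the order-$c$ subgroup $R_u$, whence $-R_u \ne R_u$ and $-S_i = \alpha^{ik}(-R_u) \ne S_i$. The lemma then delivers $\G(k,p^{ab}) \cong \vec{\square}^{\,b}\, \G(u, p^a)$, completing the proof. The main obstacle is the basis claim---that the multiplicative cosets of $R_u$ in $R_k$ spread across complementary additive subspaces---which is precisely where the primitive-divisor condition $bc \dagger p^{ab}-1$ is used, mirroring the structural input of the Pearce--Praeger argument in the undirected case.
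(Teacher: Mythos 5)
Your proof is correct and follows essentially the same route as the paper: both apply Lemma \ref{lem Cay dig} to the decomposition of $R_k$ into the multiplicative cosets $\alpha^{ik}R_u$ sitting inside the additive subgroups $\alpha^{ik}\ff_{p^a}$, and both use the oddness of $c$ to get $S_i\neq -S_i$. The only difference is that you prove the direct-sum and coset-covering facts from scratch via the power-basis argument for $\alpha^k$ over $\ff_{p^a}$, whereas the paper imports them from Lemmas 3.1 and 3.2 of Pearce--Praeger, so your version is slightly more self-contained but structurally identical.
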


\begin{proof}
Consider the graph $\G=\G(k,q)=\Cay(\ff_{q}, S)$ with $k=\tfrac{p^{ab}-1}{bc}$, $q=p^{ab}$ and 
$S=R_{k} = \{x^k : x\in \ff_{q}^* \}$.  
By the comments in the Introduction after \eqref{Gammas} we have that $\G$ is directed, since by hypothesis both $p$ and $n=\frac{q-1}{k}=bc$ are odd, and that $|S|=\tfrac{p^{ab}-1}{k} = bc$.

Let $\omega$ be a primitive element of $\ff_{p^{ab}}$, 
let $C$ be the multiplicative cyclic subgroup of $\ff_{p^{ab}}^*$ generated 
by $\omega^t$ with $t=\tfrac{p^{ab}-1}{c}$, that is $C=R_{t}=\langle \omega ^{t} \rangle$, and consider the cosets 
$S_i= C \omega^{ki}$ for $i=1,\ldots,b$. 
Then we have $S_b=C$ and $|S_i|=|C|=c$  for all $i=1,\ldots,b$.
From the proof of Lemma 3.2 in \cite{PP} we have that
$$S=\bigcup_{1\le i \le b} S_i \qquad \text{ and } \qquad \ff_{p^{ab}} = \bigoplus_{1\le i \le b} \langle S_i \rangle^+$$
where $\langle S_i \rangle^+$ denotes the additive subgroup generated by $S_i$ in the field.
By Lemma~3.1 in \cite{PP}, $\langle S_{b}\rangle ^{+}=\langle C \rangle ^{+}$ is a field. 
On the other hand $|\langle S_i\rangle ^{+}|=p^a$ for all $i=1,\ldots, b$ (see the proof of Lemma 3.2 in \cite{PP}).
In particular, we have that 
\begin{equation} \label{CayS+}
	\Cay (\langle S_{b}\rangle^{+},S_b)= \G(\tfrac{p^a -1}{c},p^{a}),
\end{equation}
since $S_b$ is a multiplicative subgroup of $\ff_{p^a}^*$ or order $c$.
	
Now, since $bc$ is odd by hypothesis, $c$ is odd and hence $S_i \ne -S_{i}$ for all $i=1,\ldots ,b$.
Indeed, if $S_i= -S_i$ for some $i$, after multiplying by $\omega^{k(b-i)}$ we get that
$S_b=-S_b$, i.e.\@ $R_u=-R_u$. But this can only happen if and only if $c$ is even, 
since $p$ is odd. Also, the additive identity of the field $0 \notin S_i$ for every $i=1,\ldots,b$, since each $S_i$ is a non-zero multiplicative coset of the multiplicative subgroup $R_t$ of $\ff_{p^{ab}}^{*}$.
	
By taking, $\G_{i}=\Cay (\langle S_{i}\rangle^{+},S_i)$, the group automorphism  
$m_{\omega^{ki}}: \ff_{p^{ab}}^+\rightarrow \ff_{p^{ab}}^+$ 
given by $m_{\omega^{ki}}(x)=x \omega^{ki}$, maps  
$\langle S_b\rangle ^{+}$ onto $\langle S_i \rangle ^{+}$ and $S_b$ onto $S_i$,
so induces a directed graph isomorphism from $\G_{b}$ to $\G_i$. 
Hence 
$\G_{i}\cong \G(\tfrac{p^a-1}{c},p^a)$
for all $i=1,\ldots,b$.
Finally, by \eqref{CayS+} and Lemma \ref{lem Cay dig}, we have that 
	$$\G(\tfrac{p^{ab-1}}{bc}, p^{ab}) \cong \vec{\square}_{i=1}^{\, b} \G_i \cong \vec{\square}^{\, b} \G(\tfrac{p^a-1}{c},p^a),$$
as asserted.
\end{proof}

As a direct consequence we obtain our first main result, a general reduction formula for Waring numbers. 

\begin{thm} \label{waringprod main}
Let $p$ be a prime and $a, b, c$ positive integers such that $c \dagger p^a-1$ and $bc \dagger p^{ab}-1$. Then, we have 
 \begin{equation} \label{gk=bgu gen}
  g(\tfrac{p^{ab}-1}{bc},p^{ab}) = b g(\tfrac{p^a-1}{c}, p^{a}).
 \end{equation} 
\end{thm}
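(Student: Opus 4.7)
The plan is to combine the two propositions just proved: Proposition~\ref{waringprod} handles the case in which $\G(\tfrac{p^{ab}-1}{bc},p^{ab})$ is undirected, and Proposition~\ref{dir prod PP} handles the remaining directed case, so the only work that remains is to transport the distance-additivity argument used in the undirected case to the directed setting.

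Concretely, I would split on parity. If $p=2$, or if $p$ is odd and $bc$ is even, then $n=\tfrac{p^{ab}-1}{bc}\cdot\tfrac{1}{1}$ (i.e.\ the valency) has the right parity for $\G(\tfrac{p^{ab}-1}{bc},p^{ab})$ to be undirected, and Proposition~\ref{waringprod} gives \eqref{gk=bgu gen} directly. So assume $p$ is odd and $bc$ is odd. Then by Proposition~\ref{dir prod PP} we have
\begin{equation*}
\G(\tfrac{p^{ab}-1}{bc},p^{ab}) \;\cong\; \vec{\square}^{\,b}\, \G(\tfrac{p^a-1}{c},p^a).
\end{equation*}
Write $\Gamma_0=\G(\tfrac{p^a-1}{c},p^a)$ and $\Gamma=\G(\tfrac{p^{ab}-1}{bc},p^{ab})$. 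The primitivity assumptions $c\dagger p^a-1$ and $bc\dagger p^{ab}-1$ guarantee (as in the undirected case) that both $\Gamma_0$ and $\Gamma$ are (strongly) connected, so both Waring numbers $g(\tfrac{p^a-1}{c},p^a)$ and $g(\tfrac{p^{ab}-1}{bc},p^{ab})$ exist and equal the respective diameters, by \eqref{g=d}.

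It then suffices to prove the directed analogue of the distance formula used in Proposition~\ref{waringprod}, namely that for the directed Cartesian product of strongly connected digraphs one has
\begin{equation*}
d_{\vec{\square}^{\,b}\Gamma_0}\bigl((x_1,\ldots,x_b),(y_1,\ldots,y_b)\bigr) \;=\; \sum_{i=1}^{b} d_{\Gamma_0}(x_i,y_i),
\end{equation*}
from which $\delta(\vec{\square}^{\,b}\Gamma_0)=b\,\delta(\Gamma_0)$ follows immediately. This is a direct coordinate-by-coordinate verification: in the directed Cartesian product every arc changes exactly one coordinate and does so along an arc of $\Gamma_0$, so any directed path from $x$ to $y$ induces directed paths from $x_i$ to $y_i$ in $\Gamma_0$ whose lengths sum to the length of the ambient path (giving $\ge$), while concatenating shortest directed paths in each coordinate yields a directed path of the required length in the product (giving $\le$). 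Here strong connectedness of $\Gamma_0$ is what permits the concatenation.

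Combining the two sides, $\delta(\Gamma)=b\,\delta(\Gamma_0)$ and the identification $g(k,q)=\delta(\G(k,q))$ from \eqref{g=d} give \eqref{gk=bgu gen} in the directed case as well. The only subtle point I anticipate is the strong-connectedness of $\Gamma_0$ under the primitive-divisor hypothesis $c\dagger p^a-1$ when $p$ and $c$ are both odd; but this is exactly what was implicitly used in Proposition~\ref{dir prod PP} (and is well known for generalized Paley digraphs, since the connection set $R_{(p^a-1)/c}$ generates $\ff_{p^a}^+$ precisely when $c$ is a primitive divisor of $p^a-1$), so no further work is needed.
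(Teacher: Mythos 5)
Your proof is correct and follows essentially the same route as the paper: the same parity split, invoking Proposition~\ref{waringprod} in the undirected case and Proposition~\ref{dir prod PP} together with the directed analogue of the distance-additivity formula in the remaining directed case, then concluding via \eqref{g=d}. The only (harmless) slip is that the valency of $\G(\tfrac{p^{ab}-1}{bc},p^{ab})$ is $n=bc$, not $\tfrac{p^{ab}-1}{bc}$; your case split on the parity of $bc$ is nonetheless the correct one.
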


\begin{proof}
If $p$ is even or if $p$ is odd and $bc$ is even the result is just Proposition~\ref{waringprod}. So, assume that both $p$ and $bc$ are odd and that $b>1$ (the case $b=1$ being trivial). 
Consider the graph $\G(\tfrac{p^{ab-1}}{bc}, p^{ab})$ which, by our assumptions, is directed.
By the hypothesis on $c$ and $bc$ we can apply Proposition \ref{dir prod PP} and hence we have that 
	$$\G(\tfrac{p^{ab-1}}{bc}, p^{ab}) = \vec{\square}^{\, b} \G(\tfrac{p^a-1}{c},p^a).$$

In the same way as for the usual (undirected) Cartesian product, one can prove that 
$	\delta (\vec{\square}_{i=1}^{\, b} \G_{i}) = \sum_{i=1}^b \delta (\Gamma_i)$
for directed graphs $\G_1,\ldots, \G_b$.
In this way, we get that 
$$\delta \big( \G(\tfrac{p^{ab-1}}{bc}, p^{ab}) \big) = \delta \big( \vec{\square}^{\, b} \G(\tfrac{p^a-1}{c},p^a) \big) 
= b \, \delta \big( \G(\tfrac{p^a-1}{c},p^a) \big).$$
The result now follows directly from \eqref{g=d}. 
\end{proof}

\begin{rem}
($i$) The case $\tfrac{p^a-1}{c}=1$, that is 
$g(\tfrac{p^{ab}-1}{b(p^a-1)}, p^{ab}) = b$, 
was the main topic of our previous work \cite{PV3}, where we considered GP-graphs $\G(k,q)$ being also 
Hamming graphs. In fact, $\G(1,p^a)=K_{p^a}$ is the complete graph of $p^{a}$ vertices and, hence, $\G(\tfrac{p^{ab}-1}{b(p^a-1)},p^{ab})=\square^b \G(1,p^a)$ is the Hamming graph $H(b, p^a)$.

\noindent ($ii$) 
To our best knowledge, there is no other exact reduction formula of this kind in the literature. There is, however, a similar upper bound due to Winterhof (\cite{Win}) asserting that  
	\begin{equation} \label{ineq red}
		g(\tfrac{p^m-1}{n},p^m) \le m\, g \big (\tfrac{p-1}{(n,p-1)},p \big ).
	\end{equation}	
 We compare the reduction formula \eqref{gk=bgu gen} with the reduction inequality \eqref{ineq red}.
 Clearly, the only possibility for the parameters in \eqref{ineq red} to be as in  
 \eqref{gk=bgu gen} is the following:
 	$$a=1, \qquad b=m, \qquad c=(n,p-1) \qquad \text{and} \qquad n=bc.$$ 
 In this way, for the parameters above, equality holds in Winterhof's inequality and turns out to be a special instance of \eqref{gk=bgu gen} in this case.
\end{rem}

\section{The reduction formula when $b$ is prime}
In this section we restrict ourselves to the case that $b$ in Theorem \ref{waringprod main} is a prime number and give arithmetic conditions to have the corresponding reduction formula. By studying these conditions we then obtain some exact formulas or reduction formulas for Waring numbers of the form $g(k,p^{ab})$ with $b$ prime.

From now on, for any positive integer $t$ we fix the notation 
\begin{equation*} \label{Psi b}
\Psi_t (x) =  x^{t-1} + \cdots + x +1.
\end{equation*} 
Note that for $x\ne 1$ we have $\Psi_t(x) = \tfrac{x^t-1}{x-1}$.
  
\begin{rem} \label{alternative expression}
	Suppose that $p, a, b , c$ are integers as in Theorem \ref{waringprod main}. If we put $u=\frac{p^a-1}{c}$ and $k=\frac{p^{ab}-1}{bc}$, 
	then the formula \eqref{gk=bgu} takes the form 
	\begin{equation*} \label{g con u}
		g(\tfrac{u}{b} \Psi_{b}(p^a),p^{ab}) = b g(u,p^a),
	\end{equation*}
	since we have $\tfrac{u}{b} \Psi_{b}(p^a) =  \tfrac{u(p^{ab}-1)}{b(p^a-1)} = \tfrac{p^{ab}-1}{bc} =  k$.
\end{rem}

In the next two lemmas, given that $c$ is a primitive divisor of $p^a-1$, we give necessary and sufficient conditions for $bc$ 
to be a primitive divisor of $p^{ab}-1$.

\begin{lem} \label{red primitive}
	Let $p$ be a prime, let $a,b,c$ be positive integers such that $c \dagger p^a-1$. 
	Then, $ bc \dagger p^{ab}-1$ if and only one of the following equivalent conditions hold:
\begin{enumerate}[$(a)$]
	\item $bc \mid p^{ab}-1$ and $bc \nmid p^{a \ell}-1$ for all $1\le\ell\le b-1$, \msk
	
	\item  $b \mid u\,\Psi_{b}(p^a)$ and $b \nmid u\,\Psi_{\ell}(p^a)$ for all $1\le\ell\le b-1$,
	where $u=\frac{p^a-1}{c}$.
\end{enumerate}
\end{lem}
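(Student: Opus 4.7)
The plan is to prove the two equivalences in sequence: first show that the primitive divisor condition $bc \dagger p^{ab}-1$ is equivalent to $(a)$, and then that $(a)$ is equivalent to $(b)$ by a routine algebraic manipulation using the $\Psi_\ell$ notation.

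For the first equivalence, I would start by recalling the standard characterization of primitive divisors in terms of multiplicative order. Since $c \dagger p^a - 1$, the multiplicative order of $p$ modulo $c$ equals $a$, and therefore $c \mid p^t - 1$ if and only if $a \mid t$. Now by definition $bc \dagger p^{ab}-1$ means $bc \mid p^{ab}-1$ together with $bc \nmid p^t - 1$ for every $1 \le t < ab$. If $bc \mid p^t - 1$ then in particular $c \mid p^t - 1$, forcing $a \mid t$; so the only values of $t$ in the range $1 \le t < ab$ that can possibly witness divisibility by $bc$ are $t = a\ell$ with $1 \le \ell \le b-1$. This immediately collapses the "for every $t$" condition in the definition of primitive divisor to the "for every $\ell$" condition stated in $(a)$, yielding the desired equivalence $bc \dagger p^{ab}-1 \Longleftrightarrow (a)$.

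For the second equivalence, the key identity is the factorization
\begin{equation*}
p^{a\ell}-1 = (p^a-1)\,\Psi_\ell(p^a) = uc\,\Psi_\ell(p^a),
\end{equation*}
valid for every $\ell \ge 1$, where I use $uc = p^a-1$. Dividing by $c$ gives
\begin{equation*}
bc \mid p^{a\ell}-1 \; \Longleftrightarrow\; b \mid u\,\Psi_\ell(p^a),
\end{equation*}
and negating this turns the two halves of condition $(a)$ (divisibility at $\ell = b$, non-divisibility for $1 \le \ell \le b-1$) into exactly the two halves of condition $(b)$.

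There is no real obstacle here: the only subtle point is the first step, where one must observe that the primitive divisor hypothesis on $c$ automatically forces any $t < ab$ satisfying $bc \mid p^t-1$ to be a multiple of $a$, so the apparently weaker condition $(a)$ (which only tests the values $t = a\ell$) is in fact equivalent to the full primitive divisor condition. After that, the passage to $(b)$ is a straightforward rewriting using the factorization of $p^{a\ell}-1$.
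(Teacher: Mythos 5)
Your proposal is correct and follows essentially the same route as the paper: the paper also reduces the general non-divisibility condition $bc \nmid p^t-1$ for $1\le t<ab$ to the multiples $t=a\ell$ by observing that $bc \mid p^t-1$ forces $c \mid p^t-1$ and hence $a\mid t$ by the primitive-divisor hypothesis on $c$ (phrased there via division with remainder rather than via $\mathrm{ord}_c(p)=a$, which the paper records as an equivalent formulation anyway), and it passes from $(a)$ to $(b)$ by the same cancellation $bc \mid uc\,\Psi_\ell(p^a) \Leftrightarrow b \mid u\,\Psi_\ell(p^a)$. No gaps.
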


\begin{proof} 
If $bc \dagger p^{ab}-1$, then $bc$ clearly satisfies condition ($a$) in the statement.
Now, assume that $bc$ satisfies condition ($a$). We only have to prove that $bc \nmid p^{t}-1$ for all 
	$1\le t\le ab-1$.
	On the one hand, if $t<a$ then $n$ cannot divide $p^t-1$, since $c$ divides $n$ and $c$ is a primitive divisor of $p^{a}-1$. On the other hand, if $ a\le t \le m=ab$ and $n\mid p^t-1$ we necessarily have that $a\mid t$. Indeed, if $t=ad+e$ with $0\le e<a-1$ then $p^t \equiv p^e \pmod c$. 
	But $p^t \equiv 1 \pmod c$ since $c\mid n$. The primitive divisibility of $c$ implies that $e=0$, therefore $a\mid t$, that is $t=a\ell$ with $1\le \ell\le b-1$. By hypothesis, $n \nmid p^{a\ell}-1$ for all $1\le\ell \le b-1$, therefore $n\dagger p^m-1$, as desired. 
Finally, condition ($b$) is equivalent to condition ($a$) since the statements $bc\mid p^{ab}-1$ and  $bc\nmid p^{a\ell}-1$ are equivalent to $b\mid u \Psi_{b}(p^a)$ and $b\nmid u \Psi_{\ell}(p^a)$, respectively.
\end{proof}

Now, we improve a little bit the previous result by giving arithmetic conditions only on $b$ (not on $bc$), in the case $b$ is  prime, such that $bc$ is a primitive divisor of $p^{ab}-1$, provided that $c$ is a primitive divisor of $p^a-1$.
\begin{lem} \label{lem equiv n primit}
Let $p,b$ be distinct primes and let $a,c$ be positive integers such that $c\dagger p^{a}-1$. 
Then, we have that
	$$bc \dagger p^{ab}-1 \qquad \Leftrightarrow \qquad b\mid p^a -1 \quad \text{and} \quad b\nmid \tfrac{p^a-1}c .$$
In particular, $2c \dagger p^{2a}-1$ if and only if both $p$ and $\tfrac{p^a-1}{c}$ are odd (hence $c$ is even).
\end{lem}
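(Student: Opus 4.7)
The plan is to reduce the primitive divisibility $bc \dagger p^{ab}-1$ to divisibility conditions on the polynomials $\Psi_\ell(p^a)$ modulo $b$ via Lemma~\ref{red primitive}(b), and then exploit the primality of $b$ together with Fermat's little theorem. Writing $u = \tfrac{p^a-1}{c}$, that lemma recasts $bc \dagger p^{ab}-1$ as the conjunction of $b \mid u\,\Psi_b(p^a)$ and $b \nmid u\,\Psi_\ell(p^a)$ for every $1 \le \ell \le b-1$. The entire argument then amounts to analyzing these two conditions modulo $b$, splitting on whether $b \mid p^a - 1$.

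First I would handle the case $b \mid p^a - 1$. Here $p^a \equiv 1 \pmod b$, so $\Psi_\ell(p^a) \equiv \ell \pmod b$ for every $\ell \ge 1$. Hence $\Psi_b(p^a) \equiv 0 \pmod b$, while $\Psi_\ell(p^a) \not\equiv 0 \pmod b$ for $1 \le \ell \le b-1$ since $b$ is prime. The condition $b \mid u\,\Psi_b(p^a)$ is therefore automatic, and the family of non-divisibility conditions collapses, using primality of $b$, to the single requirement $b \nmid u$. This gives the ``$\Leftarrow$'' direction of the equivalence.

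Next I would rule out the case $b \nmid p^a - 1$ by deriving a contradiction. Taking $\ell = 1$ gives $\Psi_1(p^a) = 1$, so the condition $b \nmid u\,\Psi_1(p^a)$ forces $b \nmid u$. On the other hand, Fermat's little theorem yields $(p^a)^b \equiv p^a \pmod b$, whence $\Psi_b(p^a)(p^a - 1) = p^{ab} - 1 \equiv p^a - 1 \pmod b$. Since $b \nmid p^a - 1$, the factor $p^a - 1$ is invertible mod $b$, so $\Psi_b(p^a) \equiv 1 \pmod b$. Then $b \mid u\,\Psi_b(p^a)$ forces $b \mid u$, contradicting $b \nmid u$. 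This rules out the case and completes the ``$\Rightarrow$'' direction.

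For the final statement about $b=2$, a direct specialization suffices: $2 \mid p^a - 1$ iff $p$ is odd, and $2 \nmid u$ iff $u$ is odd; when $p$ is odd, $p^a - 1 = cu$ is even, so $u$ odd forces $c$ even, which is the parenthetical remark. The main (mild) obstacle is recognizing that the single index $\ell = 1$ together with the Fermat computation $\Psi_b(p^a) \equiv 1 \pmod b$ already produces the contradiction needed in the second case; everything else is bookkeeping on the polynomial conditions supplied by Lemma~\ref{red primitive}.
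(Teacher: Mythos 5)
Your proposal is correct and follows essentially the same route as the paper: both reduce $bc \dagger p^{ab}-1$ to the conditions $b \mid u\,\Psi_b(p^a)$ and $b \nmid u\,\Psi_\ell(p^a)$ via Lemma~\ref{red primitive}(b), and the backward direction (reading everything mod $b$ when $p^a \equiv 1 \pmod b$ so that $\Psi_\ell(p^a)\equiv \ell$) is identical to the paper's. The only real difference is in the forward direction: where the paper deduces $b \mid \Psi_b(p^a)$ and then cites Lemma~5.2 of \cite{PV3} (i.e.\ Lemma~\ref{lema 1}(a) with $t=1$) to conclude $p^a \equiv 1 \pmod b$, you instead give a short self-contained argument that $b \nmid p^a-1$ forces $\Psi_b(p^a)\equiv 1 \pmod b$ by Fermat's little theorem applied to $\Psi_b(p^a)(p^a-1)=p^{ab}-1$, which yields the needed contradiction directly. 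This makes the proof slightly more elementary at the cost of nothing; the logic is complete, including the specialization to $b=2$.
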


\begin{proof}
Put $x=p^a$ and $u=\tfrac{p^a-1}{c}$. By Remark \ref{alternative expression}, if $k=\frac{p^m-1}{bc}$ then $k=\frac{u}{b}\Psi_{b}(x)$.
	
Suppose first that $bc \dagger p^{ab}-1$. Then, 
\begin{equation}\label{equiv div}
		bc \mid p^{t}-1 \qquad \Leftrightarrow \qquad b(p^a-1) \mid u(p^t-1)
\end{equation}
for all $t$. By hypothesis $n\nmid p^{t}-1$ for $1\le t<m $. 
In particular, taking $t=a$ we obtain that $b\nmid u$ by \eqref{equiv div}. 
Since $n\mid p^{m}-1$ we have that $b\mid u\,\Psi_{b}(x)$, by \eqref{equiv div}. 
Thus, $b\mid \Psi_{b}(x)$ since $b$ is prime and $b\nmid u$. By Lemma 5.2 in \cite{PV3}, 
taking $t=1$ and $h=0$, we obtain that $x\equiv 1 \pmod b$ as we wanted.
	
Now assume that $x\equiv 1\pmod{b}$ and $b\nmid u$. 
By ($b$) in Lemma \ref{red primitive}, it is enough to prove that $b\mid u \Psi_{b}(x)$ and $b\nmid u \Psi_{\ell}(x)$ for all $1\le\ell\le b-1$. 
Let $1\le \ell \le b$. Since $x\equiv 1 \pmod b$ we have that
	$$u\,\Psi_{\ell}(x) \equiv u\ell \pmod{b}.$$
Thus, since $0<\ell\le b$ and $b\nmid u$, we have that $u \,\Psi_{\ell}(x) \equiv 0 \pmod b$ if and only if $b=\ell$, as asserted. The remaining assertion is clear and the result is proved. 
\end{proof}

Thus, we obtain a reduction formula for Waring numbers in the case $b$ is prime with conditions imposed only on $b$ (and not on $bc$).

\begin{prop} \label{prod primecase}
Let $p,b$ be distinct primes and let $a,c$ be positive integers such that $c\dagger p^{a}-1$. 
If $b \mid p^a-1$ and $b\nmid \frac{p^a-1}c$, then \eqref{gk=bgu gen} holds; that is
$$g(\tfrac{p^{ab}-1}{bc},p^{ab}) = b g(\tfrac{p^a-1}c,p^a).$$
\end{prop}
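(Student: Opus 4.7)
The plan is simply to combine Theorem \ref{waringprod main} with the preceding Lemma \ref{lem equiv n primit}, since the hypotheses of Proposition \ref{prod primecase} have been engineered precisely so that the arithmetic conditions required by the reduction formula are automatic.

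First I would set up the situation: we have $c \dagger p^a - 1$ by hypothesis, and we want to invoke Theorem \ref{waringprod main}, whose only remaining requirement is that $bc \dagger p^{ab} - 1$. This is exactly the content of the equivalence in Lemma \ref{lem equiv n primit}, which says that for distinct primes $p, b$ and for $c \dagger p^a - 1$, the condition $bc \dagger p^{ab} - 1$ is equivalent to the two conditions $b \mid p^a - 1$ and $b \nmid \tfrac{p^a-1}{c}$. Both of these are granted by the hypothesis of the proposition.

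Therefore I would argue, in essentially one line, that Lemma \ref{lem equiv n primit} yields $bc \dagger p^{ab} - 1$, and then Theorem \ref{waringprod main} applied to the triple $(a,b,c)$ immediately gives
\[
 g(\tfrac{p^{ab}-1}{bc},p^{ab}) = b\, g(\tfrac{p^a-1}{c},p^{a}),
\]
which is the desired identity. There is no genuine obstacle here — the proof is a direct application of the two preceding results, and the only potential subtlety is remembering that Lemma \ref{lem equiv n primit} requires $p$ and $b$ to be distinct primes, which is precisely what the proposition assumes.
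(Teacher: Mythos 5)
Your proposal is correct and matches the paper's proof exactly: the paper likewise deduces $bc \dagger p^{ab}-1$ from Lemma \ref{lem equiv n primit} and then applies Theorem \ref{waringprod main}. Nothing is missing.
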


\begin{proof}
By Lemma \ref{lem equiv n primit}, we have that $bc \dagger p^{ab}-1$ and hence, by Theorem~\ref{waringprod main}, expression   $g(\tfrac{p^{ab}-1}{bc},p^{ab}) = b\, g(\frac{p^a-1}c,p^a)$ holds, as desired.
\end{proof}

We will next give some instances of the previous proposition in the particular cases when $a=1$, $b=2$, or $u=\tfrac{p^a-1}c$ equals $1$ or $2$.

\begin{coro} \label{coro2}
Let $p$ and $b$ be distinct primes and let $a \in \N$. If $b \mid p^a-1$ then,  
\begin{equation} \label{gb}
	g(\tfrac{p^{ab}-1}{b(p^a-1)}, p^{ab}) = b.
\end{equation}
If, further, $p$ and $b$ are odd, then we also have
\begin{equation} \label{gb2}
	g(\tfrac{2(p^{ab}-1)}{b(p^a-1)}, p^{ab}) = 2b.
\end{equation}
\end{coro}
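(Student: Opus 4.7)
The plan is to deduce both formulas as special instances of Proposition~\ref{prod primecase}, by choosing the primitive divisor $c$ of $p^a-1$ in such a way that the auxiliary Waring number $g(\tfrac{p^a-1}{c}, p^a)$ is already known to us.

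For \eqref{gb}, I would take $c = p^a - 1$, so that $u = \tfrac{p^a-1}{c} = 1$. This $c$ is trivially a primitive divisor of $p^a - 1$ (any divisor equal to $p^a-1$ itself is strictly larger than $p^t-1$ for $t<a$), and the hypothesis $b \nmid u$ of Proposition~\ref{prod primecase} is automatic since $u=1$. As $g(1, p^a) = 1$ by the remarks after \eqref{existence}, the proposition produces $g(\tfrac{p^{ab}-1}{b(p^a-1)}, p^{ab}) = b \cdot g(1, p^a) = b$.

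For \eqref{gb2}, I would take $c = \tfrac{p^a - 1}{2}$, which is a positive integer since $p$ is odd. Then $u = \tfrac{p^a-1}{c} = 2$, and $b \nmid u$ holds because $b$ is an odd prime. The one point that requires verification is that $c$ is still a primitive divisor of $p^a-1$: this is vacuous when $a=1$, and for $a \ge 2$ it follows from the size estimate $\tfrac{p^a-1}{2} > p^{a-1} - 1$ (valid for every odd $p$), which prevents $c$ from dividing any $p^t - 1$ with $1 \le t \le a - 1$. Proposition~\ref{prod primecase} then reduces the problem to computing $b \cdot g(2, p^a)$, and Small's criterion \eqref{small g2} applied with $k=2$ and $q=p^a$ yields $g(2, p^a) = 2$, since $2 \le \sqrt[4]{p^a} + 1$ whenever $p \ge 3$.

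I do not anticipate any real obstacle: the whole argument is a matter of picking the right $c$ and carrying out an elementary check. The only step that deserves a moment of care is the primitivity check for $c = (p^a-1)/2$, but once one observes that any divisor of $p^a-1$ exceeding $p^{a-1} - 1$ is automatically primitive, the verification is immediate.
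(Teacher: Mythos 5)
Your proposal is correct and follows exactly the paper's own argument: both formulas are obtained from Proposition~\ref{prod primecase} with the choices $c=p^a-1$ (so $u=1$) and $c=\tfrac{p^a-1}{2}$ (so $u=2$), respectively, combined with $g(1,p^a)=1$ and Small's result $g(2,p^a)=2$. The only difference is that you spell out the (easy) primitivity check for $c$, which the paper leaves implicit.
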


\begin{proof}
For \eqref{gb}, just take $c=p^a-1$ in Proposition \ref{prod primecase} and use that $g(1,q)=1$ for every $q$.
To see \eqref{gb2}, since $p$ and $b$ are odd we can take $c=\tfrac{p^a-1}{2}$ in Proposition \ref{prod primecase}. 
Winterhof proved (see ($5$) in \cite{Win}) that 
\begin{equation} \label{ineq wint}
g(k,p^a)\le k.
\end{equation}
Thus, since $g(k,q)=1$ if and only if $k=1$, we have that $g(2,p^a)=2$ for every $p$ odd, and the result follows. 
\end{proof}

\begin{rem}
Expression \eqref{gb} can be obtained from Theorems 4.1 and 6.1 in \cite{PV3}. 
\end{rem}

\begin{exam} \label{otro ejemplo}
($i$) Taking $p$ odd and $b=2$ in \eqref{gb}, for any $a\in \N$ we have 
\begin{equation} \label{fla1}
g(\tfrac{p^a+1}{2}, p^{2a})=2.
\end{equation}

\noindent ($ii$) 
Now, consider $p=2$. Since $3\mid 2^a-1$ for $a=2t$, $5\mid 2^a-1$ for $a=4t$ and $7\mid 2^a-1$ for $a=3t$ for any $t\in \N$, 
by \eqref{gb} we have
\begin{equation} \label{fla2}
\begin{aligned}
&g(\tfrac{2^{6t}-1}{3(2^{2t}-1)}, 2^{6t})   = g(\tfrac{2^{4t}+2^{2t}+1}{3}, 2^{6t}) = 3, \\ 
&g(\tfrac{2^{20t}-1}{5(2^{4t}-1)}, 2^{20t}) = g(\tfrac{2^{16t}+2^{12t}+2^{8t}+2^{4t}+1}{5}, 2^{20t}) = 5, \\
&g(\tfrac{2^{21t}-1}{7(2^{3t}-1)}, 2^{21t}) = g(\tfrac{2^{18t}+2^{15t}+2^{12t}+2^{9t}+2^{6t}+2^{3t}+1}{7}, 2^{21t}) = 7,
\end{aligned}
\end{equation}
for every $t\ge 1$. The results in \eqref{fla1} and \eqref{fla2} are part of Corollaries 6.2--6.5 in \cite{PV3}.

\noindent ($iii$)
Taking $p=7$ and $b=3$ in \eqref{gb} and \eqref{gb2}, since $7^a\equiv 1 \pmod 3$, for every $a\in \N$ we have
$$g(\tfrac 13 (7^{2a}+7^a+1), 7^{3a})=3 \qquad \text{and} \qquad g(\tfrac 23 (7^{2a}+7^a+1), 7^{3a})=6.$$ 
For instance,  
$g(19, 343) = 3$, $g(817, 117{.}649)=3$, $g(39{.}331, 40{.}353{.}607)=3$ and $g(38, 343)=6$, 
$g(1{.}634, 117{.}649) =6$ and $g(78{.}662, 40{.}353{.}607) =6$,
for $a=1$, $2$ and $3$ respectively. 
\hfill $\lozenge$
\end{exam}

\begin{coro} \label{coro1}
Let $p$ be an odd prime and $a,c$ integers such that $c\dagger p^a-1$ (hence $c$ is even).
If $\tfrac{p^a-1}{c}$ is odd, then 
\begin{equation} \label{g2b}
	g(\tfrac{p^{2a}-1}{2c}, p^{2a})=2 g(\tfrac{p^a-1}{c},p^a).
\end{equation}
\end{coro}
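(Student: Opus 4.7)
The plan is to recognize this corollary as the specialization $b=2$ of Proposition~\ref{prod primecase}, and simply to verify that the four hypotheses of that proposition are satisfied under the given assumptions.

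First I would check that $p$ and $b=2$ are distinct primes: since $p$ is odd by assumption, this is immediate. Next I would verify the primitivity condition $c \dagger p^a-1$, which is given directly in the statement. The remaining two conditions in Proposition~\ref{prod primecase} are $b \mid p^a-1$ and $b \nmid \frac{p^a-1}{c}$. The first holds because $p$ is odd, so $p^a-1$ is even and hence divisible by $2$. The second holds exactly because $\tfrac{p^a-1}{c}$ is odd by hypothesis. (As a sanity check for the parenthetical remark in the statement, note that since $p^a-1$ is even and $\tfrac{p^a-1}{c}$ is odd, the factor $c$ must absorb all the powers of $2$, so $c$ is even.)

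Having verified all the hypotheses, Proposition~\ref{prod primecase} applied with $b=2$ yields
$$g(\tfrac{p^{2a}-1}{2c}, p^{2a}) = 2\, g(\tfrac{p^a-1}{c}, p^a),$$
which is exactly the desired identity. No arithmetic obstacle is expected; the content of the corollary is really just translating the abstract divisibility conditions of Proposition~\ref{prod primecase} into the cleaner parity condition on $\tfrac{p^a-1}{c}$ that is available when $b=2$ and $p$ is odd.
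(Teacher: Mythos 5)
Your proposal is correct and follows exactly the paper's route: the paper's proof of this corollary is the one-line ``take $b=2$ in Proposition~\ref{prod primecase}.'' Your additional verification that the hypotheses $2\mid p^a-1$ and $2\nmid\frac{p^a-1}{c}$ follow from $p$ odd and the parity assumption is the correct (if implicit in the paper) justification.
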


\begin{proof}
Just take $b=2$ in Proposition \ref{prod primecase}.
\end{proof}

\begin{exam} \label{g=4}
For any odd prime $p>3$ we have 
\begin{equation}\label{flap4}
\begin{aligned}
	& g(\tfrac 32 (p^2+1), p^4)=4, \\[1mm]
	& g(4(p^2+1), p^4)=4, \quad \qquad \text{for $p\equiv \pm 3 \, ($mod $8), \, p\ge 67$}.
\end{aligned}
\end{equation} 
For instance, for the smallest cases $p=5$ and $p=67$, respectively, we have
	$$g(39,625)=4 \qquad \text{and} \qquad g(6{.}735, 20{.}151{.}121) = g(17{.}960, 20{.}151{.}121)=4.$$

To prove \eqref{flap4} we use Corollary \ref{coro1} with $a=2$ together with the following result 
\begin{equation} \label{small g2}
k\mid q-1 \quad \text{and} \quad 2\le k\le \sqrt[4]{q} +1 \qquad \Rightarrow \qquad g(k,q)=2
\end{equation}
due to Small (\cite{Sm}).
 For the first expression, take $c=\tfrac{p^2-1}{3}$; hence $c\dagger p^2-1$ and by \eqref{g2b}
we have 
	$$g(\tfrac 32 (p^2+1), p^4) = 2g(3,p^2)=4,$$
since $g(3,p^2)=2$ by \eqref{small g2}.  
For the second expression, take $p\equiv \pm 3 \pmod 8$ and $c=\tfrac{p^2-1}{8}$. 
Thus, $c\dagger p^2-1$ and by \eqref{g2b}
we have 
	$$g(4(p^2+1), p^4) = 2g(8,p^2)=4,$$
since $g(8,p^2)=2$ for any $p^2 \ge 8$, by \eqref{small g2} again. \hfill $\lozenge$
\end{exam}

\begin{coro} \label{coro3}
Let $p$ and $b$ be distinct primes and $c \in \N$ such that $b,c \mid p-1$. If $b \nmid \tfrac{p-1}{c}$, then 
\begin{equation} \label{ga1}
	g(\tfrac{p^{b}-1}{bc}, p^{b}) =b g(\tfrac{p-1}{c},p).
\end{equation}
\end{coro}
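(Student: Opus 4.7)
The plan is to obtain Corollary \ref{coro3} as the direct specialization of Proposition \ref{prod primecase} to the case $a=1$. Since Proposition \ref{prod primecase} is already stated and proved earlier in the section, essentially all that is required is to check that the three hypotheses of that proposition translate verbatim into the three hypotheses of the corollary.

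More precisely, I would set $a = 1$ and read off each hypothesis of Proposition \ref{prod primecase}. The condition that $p$ and $b$ are distinct primes is given. The condition $c \dagger p^{a}-1$ becomes $c \dagger p-1$; since the primitive divisor requirement $c \nmid p^{t}-1$ for $1 \le t < a$ is vacuous when $a=1$, this reduces to the hypothesis $c \mid p-1$. The remaining two conditions, $b \mid p^{a}-1$ and $b \nmid \tfrac{p^{a}-1}{c}$, specialize to $b \mid p-1$ and $b \nmid \tfrac{p-1}{c}$, which are exactly what the corollary assumes.

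Having verified the hypotheses, Proposition \ref{prod primecase} immediately yields
$$g(\tfrac{p^{b}-1}{bc},\, p^{b}) \;=\; b\, g(\tfrac{p-1}{c},\, p),$$
which is precisely \eqref{ga1}. There is no real obstacle: the whole content of the statement lies in Theorem \ref{waringprod main} together with Lemma \ref{lem equiv n primit} (which guarantees $bc \dagger p^{b}-1$), both already established. The corollary simply records the cleanest shape of the reduction formula when $a=1$, where the base Waring number $g(\tfrac{p-1}{c}, p)$ is a Waring number over the prime field $\mathbb{F}_p$.
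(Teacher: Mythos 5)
Your proposal is correct and matches the paper's proof exactly: the paper also obtains Corollary \ref{coro3} by simply taking $a=1$ in Proposition \ref{prod primecase}. Your additional verification that the hypotheses specialize correctly (in particular that $c \dagger p-1$ reduces to $c \mid p-1$ when $a=1$) is a worthwhile sanity check but introduces nothing beyond the paper's one-line argument.
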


\begin{proof}
Just take $a=1$ in Proposition \ref{prod primecase}.
\end{proof}

As a particular interesting cases of the above corollary we have the following. 
\begin{coro} \label{coro4}
Let $b$ be a prime and $p$ a prime of the form $p=bt+1$, $t\in \N$, with $b\nmid t$. Then, 
\begin{equation} \label{gbc}
	g(\tfrac{p^{b}-1}{b^2}, p^{b})=b g(t,p).
\end{equation}
 In particular, for all primes of the form $p=2b+1$ with $b$ odd we have 
\begin{equation} \label{gbc2}
	g(\tfrac{p^{b}-1}{b^2}, p^{b})= 2b.
\end{equation}
\end{coro}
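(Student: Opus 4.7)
The plan is to derive \eqref{gbc} by a direct specialization of Corollary \ref{coro3} with $c=b$, and then obtain \eqref{gbc2} from \eqref{gbc} using Small's bound \eqref{small g2}.

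First, I would verify that the hypotheses of Corollary \ref{coro3} are satisfied with the choice $c=b$. Since $p=bt+1$, clearly $b\mid p-1$, so the conditions $b,c\mid p-1$ hold trivially with $b=c$. The nondivisibility hypothesis $b\nmid (p-1)/c=t$ is exactly the given condition $b\nmid t$, and $b$ and $p$ are automatically distinct primes since $p=bt+1>b$. Substituting $c=b$ in the conclusion of Corollary \ref{coro3} gives
$$g\!\left(\tfrac{p^b-1}{b^2},p^b\right)=b\,g\!\left(\tfrac{p-1}{b},p\right)=b\,g(t,p),$$
which is precisely \eqref{gbc}.

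For the second assertion \eqref{gbc2}, I would specialize \eqref{gbc} to $p=2b+1$, so that $t=2$. Since $b$ is an odd prime we have $b\ge 3$ and hence $p\ge 7$, which in particular gives $2\le p^{1/4}+1$. Small's criterion \eqref{small g2} then yields $g(2,p)=2$, and plugging $t=2$ and $g(2,p)=2$ into \eqref{gbc} produces
$$g\!\left(\tfrac{p^b-1}{b^2},p^b\right)=2b,$$
as desired.

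There is no real technical obstacle: the proof amounts to a direct verification of the hypotheses of Corollary \ref{coro3} followed by an application of Small's classical bound. The only mild point to notice is that the combination $b\mid p-1$ and $b\nmid t$ is equivalent to $b^2\nmid p-1$, which is exactly what makes $c=b$ a legitimate choice in the corollary.
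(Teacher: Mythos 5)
Your proposal is correct and follows exactly the paper's own route: specialize Corollary \ref{coro3} with $c=b$ to get \eqref{gbc}, then set $t=2$ and invoke Small's bound $g(2,p)=2$ for \eqref{gbc2}. The paper's proof is just a terser version of the same argument, so nothing further is needed.
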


\begin{proof}
For the first expression just take $b=c$ in Corollary \ref{coro3}. The second one follows by Small's result \eqref{small g2}, since $g(2,p)=2$ for every odd prime $p$. 
\end{proof}

\begin{coro} \label{coro5}
Let $p$ be an odd prime. If $p\equiv 3\pmod{4}$ then, 
\begin{equation} \label{gb p-1}
	g(\tfrac{p^{2}-1}{4}, p^{2})=p-1.
\end{equation}
If $p\equiv 1 \pmod 4$ then $g(\tfrac{p^{2}-1}{4}, p^{2})$ does not exist.
\end{coro}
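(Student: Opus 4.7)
My plan is to split into the two cases according to $p\pmod 4$. For $p\equiv 3\pmod 4$ I would invoke Corollary~\ref{coro3} with parameters $a=1$ and $b=c=2$, thereby reducing the problem to the base case $g(\tfrac{p-1}{2},p)$. The hypotheses $b,c\mid p-1$ are immediate from $p$ odd, and the auxiliary condition $b\nmid \tfrac{p-1}{c}$ becomes $2\nmid \tfrac{p-1}{2}$, which is exactly the assumption $p\equiv 3\pmod 4$. The corollary then yields
\[ g(\tfrac{p^2-1}{4},p^2)=2\,g(\tfrac{p-1}{2},p). \]

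To finish this case I would compute $g(\tfrac{p-1}{2},p)$ directly. By Fermat's little theorem, for $x\in \ff_p^*$ we have $x^{(p-1)/2}=\pm 1$ and both signs are realized, so the connection set is $R_{(p-1)/2}=\{1,-1\}$. Consequently $\G(\tfrac{p-1}{2},p)=\Cay(\ff_p,\{\pm 1\})$ is the cycle $C_p$ (it is undirected since $R_{(p-1)/2}=-R_{(p-1)/2}$), whose diameter is $\tfrac{p-1}{2}$. By \eqref{g=d} this gives $g(\tfrac{p-1}{2},p)=\tfrac{p-1}{2}$, and multiplying by $2$ produces the claimed value $p-1$.

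For $p\equiv 1\pmod 4$ I would instead appeal to the existence criterion \eqref{existence}. With $q=p^2$ and $m=2$ the only proper divisor of $m$ is $d=1$, so the criterion requires $\tfrac{p^2-1}{p-1}=p+1$ to not divide $k=\tfrac{p^2-1}{4}$. A direct computation gives $\tfrac{(p^2-1)/4}{p+1}=\tfrac{p-1}{4}$, which is an integer precisely when $p\equiv 1\pmod 4$. In that case $(p+1)\mid \tfrac{p^2-1}{4}$, the necessary condition in \eqref{existence} fails, and hence $g(\tfrac{p^2-1}{4},p^2)$ does not exist.

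No serious obstacle is anticipated: the reduction formula carries the main algebraic weight, and the identification of the base case with the cycle $C_p$ is immediate once one observes $R_{(p-1)/2}=\{\pm 1\}$. The only delicate point is bookkeeping the divisibility hypotheses of Corollary~\ref{coro3}, which conveniently isolate precisely the residue class $p\equiv 3\pmod 4$ where the Waring number exists.
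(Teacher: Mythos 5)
Your proposal is correct and follows essentially the same route as the paper: the paper also takes $b=c=2$ in the prime-$b$ reduction formula (via Corollary~\ref{coro4}, which is just Corollary~\ref{coro3} with $b=c$), uses $g(\tfrac{p-1}{2},p)=\tfrac{p-1}{2}$, and disposes of the case $p\equiv 1\pmod 4$ by \eqref{existence}. The only difference is that you supply explicit justifications (the identification of $\G(\tfrac{p-1}{2},p)$ with the cycle $C_p$, and the computation $(p+1)\mid\tfrac{p^2-1}{4}\Leftrightarrow 4\mid p-1$) for facts the paper merely cites as well known; both are correct.
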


\begin{proof}
Just take $b=c=2$ in Corollary \ref{coro4}.
Hence $p=2t+1$ with $t$ odd, that is $p\equiv 3 \pmod 4$ and we have that
$g(\tfrac{p^{2}-1}{4}, p^{2})=2 g(\tfrac{p-1}{2},p)=p-1$,
since it is well-known that $g(\frac{p-1}2,p) = \frac{p-1}2$ for every prime $p$.
The remaining assertion follows by \eqref{existence}.
\end{proof}

\begin{exam}
($i$) Note that the primes $p=7, 11, 23, 47$ satisfy $p=2b+1$ with prime $b=3, 5, 11, 23$ respectively. Thus, by Corollary \ref{coro4}, we have that
$g(38,343) = g(\tfrac{7^3-1}{9}, 7^3) = 6$, $g(6{.}442,161{.}051)= g(\tfrac{11^5-1}{25}, 11^5) = 10$ and 
$$g(7{.}874{.}460{.}809{.}206, 952{.}809{.}757{.}913{.}927) = g(\tfrac{23^{11}-1}{121}, 23^{11}) =22.$$ 
The next Waring number is $g(\tfrac{47^{23}-1}{529}, 47^{23}) = 46$, 
where $k=\tfrac{47^{23}-1}{529}$ equals 
{\footnotesize $$542{.}994{.}037{.}206{.}173{.}139{.}403{.}790{.}190{.}513{.}425{.}518, 287{.}243{.}845{.}682{.}065{.}590{.}744{.}605{.}010{.}781{.}602{.}099{.}023.$$}

\noindent ($ii$)
Taking $p=7,11, 19, 23, 31, 43$ in Corollary \ref{coro5}, by \eqref{gb p-1} we have that   
$g(12, 49)=6$, $g(30, 121)=10$, $g(90, 361)=18$, $g(132, 529)=22$, $g(240, 961)=30$, $g(462, 1{.}849)=42$.
 \hfill $\lozenge$
\end{exam}

\section{Arithmetic properties of $\Psi_{b}(x)$}
As we have already mentioned in the previous section (see Remark \ref{alternative expression} and Lemma~\ref{red primitive}), the Waring numbers $g(k,q)$ that we are considering can be put in terms of the number 
$$\Psi_b(p^a) = \tfrac{p^{ab}-1}{p^a-1} = p^{a(b-1)}+ \cdots + p^{2a}+ p^a+1,$$ 
that is $k=\tfrac 1b \Psi(p^a)$. Thus, we need that $b$ divides $\Psi_b(p^a)$. The case when $b$ is prime was considered in Section 3.
In this section we give some divisibility properties for $\Psi_b(x)$, with $x\in \Z$, that will be used in the next section 
to obtain reduction formulas for Waring numbers $g(k,p^{ab}) $ in some general cases.
The section is somehow technical and can be omitted at first reading. 

We denote by $ord_b(a)$ the order of $a$ modulo $b$.
We point out that 
\begin{equation} \label{equiv ord}
n\dagger p^m-1 \qquad \Leftrightarrow \qquad ord_{n}(p)=m.
\end{equation} 
We begin by recalling Lemmas 5.2 and 5.3 from \cite{PV3}, that we state here together.

\begin{lem}[\cite{PV3}] \label{lema 1}
Let $r, r_1, \ldots, r_s$ be prime numbers and $x,t, t_1, \ldots,t_s$ be positive integers for $s\ge 1$. Then we have the following:
\begin{enumerate}[$(a)$]
	\item If $(x,r)=1$, then $r^t\mid \Psi_{r^t}(x)$ if and only if $ord_{r^t}(x)=r^h$ for some $0\le h \le t-1$. 
	\sk 
	
	\item If $x$ is coprime with $b = r_1^{t_1} \cdots r_s^{t_s}$ and $ord_{r_{i}^{t_i}}(x)=r_{i}^{h_i}$ with $0\le h_i \le t_{i}-1$ for all $i=1,\ldots,s$ then $b \mid \Psi_{b}(x)$.
\end{enumerate}
\end{lem}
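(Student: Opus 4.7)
My plan is to deduce both parts from the Lifting the Exponent (LTE) lemma, which gives a precise formula for $v_r(x^n-1)$ in terms of $v_r(x-1)$ and $v_r(n)$, where $v_r$ denotes the $r$-adic valuation. The conceptual bridge between the order condition and divisibility of $\Psi$ is the following observation: for a prime $r$ coprime to $x$, $\mathrm{ord}_{r^t}(x)$ is a power of $r$ if and only if $r\mid x-1$. One direction uses that $\mathrm{ord}_{r}(x)$ divides both $\mathrm{ord}_{r^t}(x)$ and $r-1$, so if the former is $r^h$ then $\mathrm{ord}_r(x)=1$; the converse direction places $x$ in the Sylow-$r$ subgroup of $(\Z/r^t\Z)^\ast$, whose order is a power of $r$ at most $r^{t-1}$.

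For part $(a)$, assume first that $r$ is odd. If $r\mid x-1$, the odd LTE lemma gives
\[
v_r(\Psi_{r^t}(x)) = v_r(x^{r^t}-1) - v_r(x-1) = \bigl(v_r(x-1)+t\bigr) - v_r(x-1) = t,
\]
so $r^t \mid \Psi_{r^t}(x)$ (indeed with equality). Conversely, if $r\nmid x-1$, I would use the telescoping factorisation $\Psi_{r^t}(x)=\prod_{i=0}^{t-1}\Psi_r(x^{r^i})$ together with the polynomial identity $\Psi_r(y)\equiv(y-1)^{r-1}\pmod{r}$ (which follows from $y^r-1\equiv(y-1)^r\pmod{r}$ by Frobenius). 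Since $x^{r^i}\equiv x\pmod{r}$ by Fermat, each factor satisfies $\Psi_r(x^{r^i})\equiv(x-1)^{r-1}\equiv 1\pmod{r}$, hence $r\nmid\Psi_{r^t}(x)$. The case $r=2$ is trivial: the order condition is automatic since $(\Z/2^t\Z)^\ast$ has $2$-power order, and the factorisation $\Psi_{2^t}(x)=\prod_{i=0}^{t-1}(1+x^{2^i})$ shows $2^t\mid\Psi_{2^t}(x)$ whenever $x$ is odd.

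For part $(b)$, by the Chinese Remainder Theorem it suffices to check $r_i^{t_i}\mid\Psi_b(x)$ for each $i$. The hypothesis $\mathrm{ord}_{r_i^{t_i}}(x)=r_i^{h_i}$ together with the observation above forces $r_i\mid x-1$. For odd $r_i$, applying LTE to the exponent $b$ (and using $v_{r_i}(b)=t_i$) gives
\[
v_{r_i}(\Psi_b(x)) = v_{r_i}(x^b-1) - v_{r_i}(x-1) = t_i.
\]
For $r_i=2$ we have $b$ even, so the $2$-adic LTE yields $v_2(x^b-1)=v_2(x-1)+v_2(x+1)+v_2(b)-1$, whence $v_2(\Psi_b(x))=v_2(x+1)+t_i-1\ge t_i$ because $x$ is odd. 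In either case $r_i^{t_i}\mid\Psi_b(x)$, and CRT concludes.

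The only genuine subtlety I anticipate is juggling the two versions of LTE (odd prime versus $p=2$), whose formulas differ; everything else is routine $r$-adic bookkeeping around the order condition.
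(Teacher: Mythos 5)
Your proof is correct. Note, however, that this paper does not actually prove the lemma: it is imported verbatim from Lemmas 5.2 and 5.3 of \cite{PV3}, so there is no in-paper argument to compare against. Judged on its own, your argument is sound and complete: the bridging observation (for $r\nmid x$, $\mathrm{ord}_{r^t}(x)$ is a power of $r$ if and only if $r\mid x-1$, and then automatically $\mathrm{ord}_{r^t}(x)\mid r^{t-1}$) is proved correctly in both directions; the odd-prime LTE computation $v_r(\Psi_{r^t}(x))=t$ and the telescoping factorisation $\Psi_{r^t}(x)=\prod_{i=0}^{t-1}\Psi_r(x^{r^i})$ with $\Psi_r(y)\equiv(y-1)^{r-1}\pmod r$ handle the two cases of $(a)$ cleanly; and the CRT reduction plus the two versions of LTE dispose of $(b)$. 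Your route differs in flavour from the congruence manipulations the authors use for the analogous statements they do prove in Section~4 (e.g.\ Propositions \ref{Psi pow cond} and \ref{primitive pow}, which work with $\Psi_\ell(\beta)\bmod r^t$ term by term); the valuation-theoretic approach is arguably slicker and yields a slightly stronger conclusion in case $(a)$ for odd $r$, namely the exact equality $v_r(\Psi_{r^t}(x))=t$ rather than mere divisibility. The only stylistic caveat is that you invoke LTE as a black box; if the ambient text is meant to be self-contained at the level of the paper's Section~4, a one-line justification or reference for the two LTE formulas would be appropriate.
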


We now give some divisibility properties for $\Psi_b(x)$ in the case when $b$ is a prime power. 
For simplicity, in the rest of the section for any $t\in \N$ we will use the notation 
\begin{equation} \label{notation}
I_{t}'=\{1,2,\ldots, t-1\}.
\end{equation}

\begin{prop} \label{Psi pow cond}
Let $r$ be an odd prime, $t, h, \ell \in \N$ and let  
$\beta \in (\mathbb{Z}_{r^t})^*$ with $ord_{r^t}(\beta)=r^{h}$. Then, we have the following:
\begin{enumerate}[$(a)$]
	\item If $2h\le t$ then 
$\Psi_{r^h}(\beta)\equiv r^h \pmod{r^t}$. \msk 
	
	\item If $2h\le t$ and $r^h\nmid \ell$ then 
		$r^h\nmid \Psi_{\ell}(\beta)$. \msk 
		
	\item For any $e \in \N$ and $\ell \in I_{r^h}'$ we have
	\begin{equation} \label{eq p2}
		\Psi_{er^h+\ell}(\beta) \equiv e \Psi_{r^h}(\beta) + \Psi_{\ell}(\beta)\pmod{r^t}.
	\end{equation}
			
	\item For any $a,b \in \mathbb{N}$ we have $\Psi_{ab}(\beta)=\Psi_{b}(\beta^a) \Psi_{a}(\beta)$.
\end{enumerate}	
\end{prop}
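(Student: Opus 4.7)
The plan is to dispose of $(d),(c),(a),(b)$ in that order, since $(d)$ is purely algebraic, $(c)$ is a combinatorial rearrangement, $(a)$ is the heart of the proposition, and $(b)$ then follows in one line. The case $h=0$ is trivial ($\beta\equiv 1\pmod{r^t}$ reduces everything to a tautology), so I will assume $h\ge 1$. Under this assumption, since $(\Z/r^t\Z)^*$ is cyclic for $r$ odd, any element of $r$-power order lies in its Sylow $r$-subgroup, so $\beta\equiv 1\pmod r$. The lifting-the-exponent (LTE) lemma then provides the basic tool $v_r(\beta^n-1)=v_r(\beta-1)+v_r(n)$ for all $n\ge 1$, and applying this with $n=r^h$ and $n=r^{h-1}$ together with $ord_{r^t}(\beta)=r^h$ forces the key identity $v_r(\beta-1)=t-h$, to be used throughout.

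For $(d)$, I would simply observe that $(x-1)\Psi_a(x)\Psi_b(x^a)=(x^a-1)\Psi_b(x^a)=x^{ab}-1=(x-1)\Psi_{ab}(x)$ as polynomials in $\Z[x]$; cancelling $x-1$ and evaluating at $x=\beta$ gives the assertion. For $(c)$, the idea is to split $\{0,1,\ldots,er^h+\ell-1\}$ into $e$ consecutive blocks of length $r^h$ followed by a tail of length $\ell$, which produces the exact identity $\Psi_{er^h+\ell}(\beta)=\Psi_e(\beta^{r^h})\,\Psi_{r^h}(\beta)+\beta^{er^h}\,\Psi_\ell(\beta)$; since $\beta^{r^h}\equiv 1\pmod{r^t}$, this reduces modulo $r^t$ to $e\,\Psi_{r^h}(\beta)+\Psi_\ell(\beta)$ as desired.

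The substantive step is $(a)$. Writing $\beta=1+\alpha$ with $v_r(\alpha)=t-h$, the binomial theorem gives $\Psi_{r^h}(\beta)=\bigl((1+\alpha)^{r^h}-1\bigr)/\alpha=r^h+\sum_{k=2}^{r^h}\binom{r^h}{k}\alpha^{k-1}$. Using the standard identity $v_r\binom{r^h}{k}=h-v_r(k)$ for $1\le k\le r^h$ (easily derived from $\binom{r^h}{k}=\tfrac{r^h}{k}\binom{r^h-1}{k-1}$ together with $\binom{r^h-1}{k-1}\equiv(-1)^{k-1}\pmod r$), the $k$-th tail term has $r$-adic valuation $h-v_r(k)+(k-1)(t-h)$, so the congruence $\Psi_{r^h}(\beta)\equiv r^h\pmod{r^t}$ reduces to the uniform inequality $(k-2)(t-h)\ge v_r(k)$ for every $2\le k\le r^h$. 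For $k=2$ this is $0\ge 0$ (using that $r$ is odd, so $v_r(2)=0$), and for $k\ge 3$ it follows from $(k-2)(t-h)\ge t-h\ge h\ge v_r(k)$, where the middle step is exactly the hypothesis $2h\le t$ and the last uses $k\le r^h$. This is the step I expect to be the main obstacle: the $r$-adic accounting must be tight in both directions, and both the endpoint $k=2$ (which saturates because of oddness of $r$) and values of $k$ close to $r^h$ (where $v_r(k)$ is largest) sit exactly on the boundary of the argument.

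Finally, $(b)$ admits a one-line proof from LTE: since $\beta\equiv 1\pmod r$, $v_r(\Psi_\ell(\beta))=v_r(\beta^\ell-1)-v_r(\beta-1)=v_r(\ell)$, so $r^h\nmid\ell$ forces $v_r(\Psi_\ell(\beta))<h$, i.e.\ $r^h\nmid\Psi_\ell(\beta)$. Alternatively, one may decompose $\ell=er^h+m$ with $m\in I_{r^h}'$ and combine $(c)$ with $(a)$ to reduce the claim to $r^h\nmid\Psi_m(\beta)$, which is the very same LTE computation.
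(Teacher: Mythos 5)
Your proof is correct, and parts $(c)$ and $(d)$ coincide with the paper's (the same block decomposition of the sum using $\beta^{r^h}\equiv 1\pmod{r^t}$, and the same telescoping identity for $\Psi$). Where you genuinely diverge is in $(a)$ and $(b)$. For $(a)$ the paper symmetrizes the sum, writing $\Psi_{r^h}(\beta)=1+\tfrac12\sum_{i=1}^{r^h-1}(\beta^i+\beta^{-i})$ and using $\beta^i+\beta^{-i}=2+(\beta^i-1)^2/\beta^i$, so that the whole correction term is a sum of squares $(\beta^i-1)^2$ each divisible by $r^{2(t-h)}$, hence by $r^t$ when $2h\le t$; it only needs the one-sided bound $r^{t-h}\mid\beta-1$, obtained by writing $\beta$ as a power of a fixed primitive root. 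You instead expand $\beta=1+\alpha$ binomially and control each tail term via $v_r\binom{r^h}{k}=h-v_r(k)$, reducing everything to the inequality $(k-2)(t-h)\ge v_r(k)$; this is a heavier $r$-adic bookkeeping but it is tight and correctly identifies the two boundary cases ($k=2$, which uses oddness of $r$ exactly as the paper's factor $\tfrac12$ does, and $k$ near $r^h$). For $(b)$ the paper deduces $\beta^i\equiv1\pmod{r^h}$ from $h\le t-h$ and concludes $\Psi_\ell(\beta)\equiv\ell\pmod{r^h}$, whereas your lifting-the-exponent argument gives the exact valuation $v_r(\Psi_\ell(\beta))=v_r(\ell)$ and does not even use the hypothesis $2h\le t$ — a strictly stronger statement. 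Your preliminary normalizations (disposing of $h=0$, and pinning down $v_r(\beta-1)=t-h$ exactly rather than just $\ge t-h$) are also sound, since an element of $r$-power order in the cyclic group $(\Z/r^t\Z)^*$ lies in the Sylow $r$-subgroup and satisfies $h\le t-1$.
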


\begin{proof}
To prove $(a)$, notice that
	$$\Psi_{r^h}(\beta)=1+\sum_{i=1}^{r^h-1}\beta^i=1+\tfrac 12 \sum_{i=1}^{r^h-1}(\beta^i+\beta^{-i}).$$
Now, taking into account that $\beta^i+\beta^{-i}=\tfrac{(\beta^i-1)^2}{\beta^i}+2$, we have 
	$$\Psi_{r^h}(\beta) = 1 + 2(\tfrac{r^h-1}{2}) + \tfrac 12 \sum_{i=1}^{r^h-1}\tfrac{(\beta^i-1)^2}{\beta^i}=r^h+\tfrac 12 \sum_{i=1}^{r^h-1}\tfrac{(\beta^i-1)^2}{\beta^i}.$$
Since $r$ is odd, it is enough to show that 
	$$\sum_{i=1}^{r^h-1} \tfrac{(\beta^i-1)^2}{\beta^{i}} \equiv 0 \pmod{ r^t}.$$ 
By basic theory of $\mathbb{Z}^*_{r^u}$, we can choose some $\alpha$ which generates $\Z^*_{r^u}$ 
for all $u\ge 1$ such that 		
	$$\beta=\alpha^{j(r-1)r^{t-1-h}}$$ 
with $j$ coprime with $r$. Clearly $(r-1)r^{t-1-h}=\varphi(r^{t-h})$ and by Euler's Theorem 
	$$\beta\equiv \alpha^{j\varphi(r^{t-h})}\equiv 1 \pmod{r^{t-h}}.$$ 
Thus $r^{t-h}\mid \beta-1$, which implies that for all $i$ we have 
\begin{equation}\label{division}
	r^{t-h}\mid \beta^{i}-1.
\end{equation}
Then, we have that $r^{2t-2h}\mid (\beta^i-1)^2$. 
Notice that $t\le 2t-2h$ since by hypothesis $2h\le t$. This implies that 
$r^t \mid r^{2t-2h}$ and hence $r^t \mid (\beta^i-1)^2$. Therefore, we obtain
$\sum_{i=1}^{r^h-1} (\beta^i-1)^2) \beta^{-i} \equiv 0 \pmod {r^t}$, as desired.
	
To see $(b)$, notice that $2h\le t$ implies $h\le t-h$ and then $r^h\mid r^{t-h}$. On the other hand, by \eqref{division} we have $r^{t-h}\mid \beta^i-1$ for all $i$. Hence $\beta^{i} \equiv 1 \pmod{r^h}$ for all $i$ and we obtain  
	$$\Psi_{\ell}(\beta)\equiv \ell \pmod{r^h}.$$
By hypothesis, we have $r^h\nmid \ell$ and therefore $r^h\nmid \Psi_{\ell}(\beta)$, as we wanted.
	
For $(c)$, we have that
	$$\Psi_{er^h+\ell}(\beta)\equiv \sum_{i=0}^{er^h-1}\beta^{i} + \sum_{i=er^h}^{er^h+\ell-1}\beta^i\equiv 
	\sum_{j=1}^{e} \sum_{i=(j-1)r^h}^{jr^h-1}\beta^{i} + \sum_{i=er^h}^{er^h+\ell-1} \beta^i \pmod{r^t} .$$
By cyclicity, since $ord_{r^t}(\beta)=r^h$, we obtain 
	$$\sum_{i=(j-1)r^h}^{jr^h-1}\beta^{i}\equiv \sum_{i=0}^{r^{h}-1}\beta^i\equiv \Psi_{r^h}(\beta)\pmod{r^t}$$ 
and 
	$$\sum_{i=er^h}^{er^h+\ell-1}\beta^i\equiv \Psi_{\ell}(\beta)\pmod{r^t}.$$
Clearly, equation \eqref{eq p2} is a consequence of these last two congruence equalities. 
	
Finally, $(d)$ follows from the identity $\Psi_{\ell}(\beta)=\frac{\beta^{\ell}-1}{\beta-1}$ for $\beta\ne 1$, and the proposition is proved.
\end{proof}

The following result will be crucial in the proof of the reduction formula for Waring numbers that we will give in the next section.

\begin{prop} \label{primitive pow}
	Let $r$ be an odd prime and let $x, t, h\in \N$. 
	If $ord_{r^t}(x)=r^h$ for some $0\le h\le t-1$, then $r^{t} \nmid \Psi_s(x)$ for all $s \in \{1,\ldots, r^t-1\}$.
\end{prop}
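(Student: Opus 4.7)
The key tool will be the Lifting the Exponent (LTE) lemma for an odd prime $r$: if $y\equiv 1\pmod{r}$, then $v_r(y^n-1)=v_r(y-1)+v_r(n)$ for every $n\in\N$, where $v_r$ denotes the $r$-adic valuation. Observe that the hypothesis of the proposition does \emph{not} impose $2h\le t$, so parts $(a)$ and $(b)$ of Proposition~\ref{Psi pow cond} do not directly apply when $h>t/2$; LTE handles all values of $h\in\{0,1,\dots,t-1\}$ uniformly.

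First, I would establish that $x\equiv 1\pmod{r}$. Since $ord_{r^t}(x)=r^h$ is a power of $r$, one has $x^{r^h}\equiv 1\pmod{r}$, and iterating Fermat's congruence $x^r\equiv x\pmod{r}$ a total of $h$ times collapses this to $x\equiv 1\pmod{r}$, placing $x$ in the regime where LTE applies. I would then pin down $v_r(x-1)$ exactly. When $h=0$ the hypothesis forces $x\equiv 1\pmod{r^t}$, so $v_r(x-1)\ge t$. When $h\ge 1$, LTE applied to $x^{r^h}\equiv 1\pmod{r^t}$ yields $v_r(x-1)+h\ge t$, while applied to $x^{r^{h-1}}\not\equiv 1\pmod{r^t}$ yields $v_r(x-1)+(h-1)<t$; together these pin down $v_r(x-1)=t-h$.

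Finally, I would exploit the identity $\Psi_s(x)=(x^s-1)/(x-1)$ together with a second application of LTE: $v_r(x^s-1)=v_r(x-1)+v_r(s)$ gives
$$v_r(\Psi_s(x))=v_r(s).$$
(The case $h=0$ is even easier since $x\equiv 1\pmod{r^t}$ yields $\Psi_s(x)\equiv s\pmod{r^t}$ directly, from which the same conclusion $v_r(\Psi_s(x))=v_r(s)$ follows whenever $v_r(s)<t$.) Since $1\le s\le r^t-1$ forces $v_r(s)\le t-1$, I obtain $v_r(\Psi_s(x))<t$, and hence $r^t\nmid \Psi_s(x)$, which is exactly the assertion.

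The only real subtlety is verifying that LTE may be invoked (namely $r$ odd and $x\equiv 1\pmod{r}$); once this setup is in place, the proof reduces to a single valuation identity. I do not anticipate a genuine obstacle, but I would remark on why the seemingly more elementary route via Proposition~\ref{Psi pow cond} falls short: the decomposition $s=er^h+\ell$ combined with parts $(a)$--$(c)$ of that proposition requires $2h\le t$, which need not hold here, so LTE provides the cleaner uniform argument.
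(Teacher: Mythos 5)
Your proof is correct, and it takes a genuinely different---and considerably shorter---route than the paper's. The paper argues by induction on $t$ through three claims: Claim~1 settles the range $2h\le t$ using the congruences of Proposition~\ref{Psi pow cond} (whose parts $(a)$ and $(b)$ indeed require $2h\le t$, exactly as you observe); Claim~2 shows $r^{h+1}\nmid\Psi_{r^h}(x)$ by a minimality argument combined with the factorization $\Psi_{r^h}(x)=\Psi_{r^{h-1}}(x^r)\Psi_r(x)$; and Claim~3 bootstraps these two facts, via the decomposition $s=er^h+\ell$, from the modulus $r^h$ (where $ord_{r^h}(x)=r^{2h-t}$) up to $r^t$. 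Your argument replaces all of this with a single valuation identity: after checking $x\equiv 1\pmod r$ (forced because $ord_{r^t}(x)$ is a power of $r$ while the order modulo $r$ divides $r-1$) and pinning down $v_r(x-1)=t-h$ from the exact order, Lifting the Exponent gives $v_r(\Psi_s(x))=v_r(x^s-1)-v_r(x-1)=v_r(s)\le t-1$ for $1\le s\le r^t-1$, which is the assertion and in fact a sharper, exact statement. The hypotheses of LTE ($r$ odd, $r\mid x-1$, $r\nmid x$) are all verified, and your treatment of the edge case $h=0$ (where $\Psi_s(x)\equiv s\pmod{r^t}$ directly) is fine. What the paper's route buys is self-containment---it relies only on the elementary congruences it has just established and feeds naturally into the surrounding machinery of Section~4---whereas your route buys brevity, uniformity in $h$ (no case split on $2h\le t$ versus $2h>t$, no induction), and the stronger conclusion $v_r(\Psi_s(x))=v_r(s)$, which would in fact also streamline the proof of Claim~2 were one to keep the paper's structure.
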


\begin{proof}
For convenience, we first prove three claims. 	
	
\smallskip 	
\noindent
\textit{Claim 1}. The assertion in the statement is true for $2h\le t$.
	
Suppose that $2h\le t$ and let $s\in I_{r^t}'$. 
On the one hand, if $r^h\nmid s$, by part $(b)$ of Lemma \ref{Psi pow cond} we have that $r^h\nmid \Psi_{s}(x)$ and then $r^{t}\nmid \Psi_{s}(x)$. On the other hand, if $s=\ell r^{h}$ for some $\ell$, by parts $(a)$ and $(c)$ of Lemma \ref{Psi pow cond} we have
	$$\Psi_{s}(x)=\Psi_{\ell r^{h}}(x)\equiv \ell\Psi_{r^h}(x)\equiv \ell r^{h} \equiv s \pmod{r^t}.$$  
	Therefore $r^t\nmid \Psi_{s}(x)$ for all $s=1,2,\ldots,r^{t}-1$.
	\hfill $\lozenge$
	
\smallskip
\noindent
\textit{Claim 2}. If $ord_{r^t}(x)=r^h$ for some $h\le t-1$, then $r^{h+1}\nmid \Psi_{r^h}(x)$.
	
It is not difficult to see that if $2h+1\le t$, then $r^{h+1}\nmid \Psi_{\ell}(x)$ for all $\ell \in I_{r^{h+1}}'$,
since in this case $ord_{r^{h+1}}(x)=1$. In particular, $r^{h+1}\nmid \Psi_{r^h}(x)$. 
	
Suppose that $r^{h+1}\mid \Psi_{r^h}(x)$ for some $h$ such that $ord_{r^t}(x)=r^h$. Thus, there exists a minimal integer $h$ with $\frac {t-1}2<h< t$ satisfying both $r^{h+1}\mid \Psi_{r^h}(x)$ and $ord_{r^t}(x)=r^h$.
	
By part $(d)$ of Lemma \ref{Psi pow cond}, we have $\Psi_{r^h}(x)=\Psi_{r^{h-1}}(x^r)\Psi_{r}(x)$ and hence $r^h\mid \Psi_{r^{h-1}}(x^r)$ or $r^2\mid \Psi_{r}(x)$, since $r\mid \Psi_{r}(x)$ because $x\equiv 1 \pmod{r}$.
	
Notice that if $ord_{r^t}(x)=r^h$,  then $ord_{r^t}(x^r)=r^{h-1}$, by minimality, we obtain that $r^h \nmid \Psi_{r^{h-1}}(x^r)$. 
On the other hand, since the order of $x$ is a power of $r$ modulo $r^2$, we have that $ord_{r^2}(x)$ is $r$ or $1$. Clearly, 
if $ord_{r^2}(x)=1$ then $\Psi_{r}(x)\equiv r \pmod{r^2}$ and thus $r^2\nmid \Psi_{r}(x)$. 
If, otherwise, $ord_{r^2}(x)=r$, by Claim~1 we obtain that $r^2\nmid \Psi_{r}(x)$. These facts imply that $r^{h+1} \nmid \Psi_{r^{h}}(x)$. \hfill $\lozenge$

\smallskip
\noindent
\textit{Claim 3}. If $r^h\nmid \Psi_{\ell}(x)$ for all $\ell \in I_{r^h}'$ and $r^{h+1}\nmid \Psi_{r^h}(x)$ then $r^t\nmid \Psi_{s}(x)$ for all $s\in I_{r^t}'$. 
	
Let $s\in I_{r^t}'$ such that $r^{h}\nmid s$. Then, $s=er^{h}+\ell$ for some $\ell\in I_{r^h}'$. 
By part $(c)$ of Lemma~\ref{Psi pow cond}, we have that  
	$$\Psi_{s}(x)\equiv e\Psi_{r^h}(x)+\Psi_{\ell}(x)\pmod{r^t}.$$
On the first hand we have that  $r^h\mid r^t$ since $h<t$, and thus the previous congruence holds modulo $r^h$. 
On the other hand, we have $r^h\mid \Psi_{r^h}(x)$ since $ord_{r^h}(x)=1$ or $r^{2h-t}$ depending whether $2h\le t$ or $2h>t$, respectively. Thus, we have that 
	$$\Psi_{s}(x)\equiv \Psi_{\ell}(x)\pmod{r^t}.$$
By hypothesis, $r^h\nmid \Psi_{\ell}(x)$ for all $\ell\in I_{r^h}'$.  
This implies that $r^h\nmid \Psi_{s}(x)$ for all $s\in I_{r^t}'$ with $r^h\nmid s$. By transitivity,
$r^t\nmid \Psi_{s}(x)$ for all $s\in I_{r^t}'$ 
such that $r^h\nmid s$.
	
Assume now that $s=er^h$ with $e \in I_{r^{t-h}}'$. 
By part ($c$) of Lemma~\ref{Psi pow cond}, we have that 
	$$\Psi_{s}(x)\equiv e \Psi_{r^{h}} \pmod{r^t}.$$
Notice that if $r^t \mid \Psi_{s}(x)$, necessarily $r^{h+1}\mid \Psi_{r^h}(x)$ since the $r$-adic value of $e$ is strictly less than $r^{t-h}$. By hypothesis $r^{h+1}\nmid \Psi_{r^h}(x)$ and then $r^t \nmid  \Psi_{s}(x)$  for $s=er^h$ with $e\in I_{r^{t-h}}'$. This complete all the cases, i.e.\@  $r^t\nmid \Psi_{s}(x)$ for all $s\in I_{r^{t-h}}'$ and the claim is proved. 
\hfill $\lozenge$

Now, we proceed by induction on $t$. Claim 1 implies that the statement holds for $t=1$ and $t=2$.
Suppose now that $t>1$ and the statement holds for all of $t'< t$. By Claim~1, it is enough to show the assertion for $2h>t$. 
Then, assume that $2h>t$. 
By taking $h'=2h-t$, we obtain that 
	$$ord_{r^h}(x)=r^{h'},$$ 
since $h<t$. By induction, we obtain that $r^h\nmid \Psi_{\ell}(x)$ for all $\ell\in I_{r^h}'$ and by Claim 2 we have that 
$r^{h+1} \nmid \Psi_{r^h}(x)$. Therefore, Claim 3 implies that 
$r^{t}\nmid \Psi_{s}(x)$ for all $s\in I_{r^t}'$, as desired.
\end{proof}

\section{Reduction formula for arbitrary $b$}
In this section, using the results from the previous one, we give conditions to have a reduction formula for Waring numbers of the form $g(\tfrac{p^{ab}-1)}{b(p^a-1)}, p^{ab}) = b  g(\tfrac{p^a-1}{c}, p^a)$
in some different cases, for arbitrary integers $b$ not a power of $2$.

We first consider the case when $b$ is a prime power. We will need a preliminary result for powers of 2 in the vein of Lemma \ref{lem equiv n primit}.

\begin{lem}\label{lem pow 2}
Let $p$ be an odd prime and suppose that $a$ and $c$ are positive integers such that $c\dagger p^{a}-1$ and $b=2^t$ for some integer $t\ge 2$. 
Then, we have that
$$bc \dagger p^{ab}-1 \qquad \Leftrightarrow \qquad p^{a}\equiv 1\pmod{4} \quad \text{and} \quad \tfrac{p^a-1}c \text{ odd} .$$
In this case, $c$ is even with either $p\equiv 1 \pmod 4$ or else $p\equiv 3 \pmod 4$ and $a$ is even.
\end{lem}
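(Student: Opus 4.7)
The plan is to translate the primitive divisor condition via Lemma~\ref{red primitive}(b) into a problem about the $2$-adic valuation $v_2$ of $u\,\Psi_\ell(x)$, where $x=p^a$ and $u=\tfrac{x-1}{c}$. With $b=2^t$, the condition $bc\dagger p^{ab}-1$ becomes: $v_2(u\,\Psi_{2^t}(x))\ge t$ while $v_2(u\,\Psi_\ell(x))<t$ for every $1\le\ell\le 2^t-1$.

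The first main step is to establish a $v_2$-formula for $\Psi_\ell(x)$ with $x$ odd. Using the telescoping factorization
\[
\Psi_{2^s}(x)=\prod_{i=0}^{s-1}\bigl(x^{2^i}+1\bigr)
\]
together with the observation that $x^{2^i}\equiv 1\pmod 8$ for all $i\ge 1$ (since $x$ is odd), one gets $v_2(\Psi_{2^s}(x))=v_2(x+1)+(s-1)$ for $s\ge 1$. Combining this with the multiplicative identity $\Psi_{2^s m}(x)=\Psi_{2^s}(x)\,\Psi_m(x^{2^s})$ from Lemma~\ref{Psi pow cond}(d), and noting that $\Psi_m(x^{2^s})\equiv m\pmod 2$ is odd for $s\ge 1$ and $m$ odd, I obtain
\[
v_2(\Psi_\ell(x))=\begin{cases} 0 & \text{if $\ell$ is odd,}\\[1mm] v_2(x+1)+v_2(\ell)-1 & \text{if $\ell$ is even.}\end{cases}
\]

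The second step is a case split on the class of $x=p^a$ modulo $4$. If $x\equiv 1\pmod 4$ then $v_2(x+1)=1$, so the maximum of $v_2(\Psi_\ell(x))$ over $1\le \ell\le 2^t$ equals $t$ and is attained only at $\ell=2^t$. The condition $v_2(u\,\Psi_{2^t}(x))\ge t$ is automatic, and the binding constraint becomes $v_2(u\,\Psi_{2^{t-1}}(x))=v_2(u)+(t-1)<t$, i.e.\ $v_2(u)=0$, i.e.\ $\tfrac{p^a-1}{c}$ is odd; all other $\ell<2^t$ are then handled automatically since their $v_2(\Psi_\ell)$ is strictly smaller. If instead $x\equiv 3\pmod 4$, then $v_2(x+1)\ge 2$, which gives $v_2(\Psi_{2^{t-1}}(x))\ge t$, so $2^t\mid u\,\Psi_{2^{t-1}}(x)$, violating primitivity. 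Hence this case is impossible when $t\ge 2$. These two cases together give the stated equivalence.

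For the last assertion, $p^a\equiv 1\pmod 4$ is equivalent to either $p\equiv 1\pmod 4$, or $p\equiv 3\pmod 4$ and $a$ even, by writing $p^a\equiv (-1)^a\pmod 4$. Finally, from $uc=p^a-1$ with $u$ odd and $4\mid p^a-1$, we conclude $4\mid c$, so in particular $c$ is even. The main obstacle will be the careful $v_2$-bookkeeping for $\Psi_\ell(x)$: the odd-prime analysis in Section~4 does not cover the prime $2$, so these $2$-adic estimates have to be derived from scratch (essentially a hand-made version of the Lifting-the-Exponent lemma for $p=2$) using only the tools already developed in the paper.
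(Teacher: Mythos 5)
Your proposal is correct and takes essentially the same route as the paper's proof: both reduce the primitive-divisor condition to $2$-adic bookkeeping via the telescoping factorization $x^{2^s}-1=(x-1)\prod_{i=0}^{s-1}(x^{2^i}+1)$, use that $v_2(x^{2^i}+1)=1$ for $i\ge 1$ (since $x^{2^i}\equiv 1 \pmod 4$), and identify $\ell=2^{t-1}$ as the critical index forcing both $p^a\equiv 1\pmod 4$ and $v_2(u)=0$. Your packaging of this as a uniform closed-form for $v_2(\Psi_\ell(x))$ (an LTE-type formula at the prime $2$) is a slightly tidier presentation than the paper's separate treatment of the two implications, but the substance is identical.
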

Note that the equivalent condition for $bc \dagger p^{ab}-1$ does not actually depend on $b$.

\begin{proof}
($\Rightarrow$)	Assume first that $bc \dagger p^{ab}-1$. Thus $bc \nmid p^{a\ell}-1$ for $\ell\in\{0,\ldots,b-1\}$, 
	in particular $bc\nmid p^{2^{t-1}a}-1$. We will use the following relation
	$$p^{2^{t-1}a}-1=(p^a-1)\prod_{0\le i \le t-2}(p^{2^{i}a}+1).$$
	Since $b=2^t$ and $c\mid p^{a}-1$, the condition $bc\nmid p^{2^{t-1}a}-1$ is equivalent to
\begin{equation} \label{v2a}	
v_{2}(bc)> v_{2}(p^{2^{t-1}a}-1),
\end{equation}
		where $v_2$ denotes the $2$-adic valuation, which in turn is equivalent to
\begin{equation*} \label{v2b}
t > v_{2}(\tfrac{p^a-1}{c}) +\sum_{0\le i\le t-2} v_{2}(p^{2^{i}a}+1).
\end{equation*}

If $t=2$, then $v_{2}(\tfrac{p^a-1}{c})=0$ and $v_{2}(p^{a}+1)=1$, otherwise \eqref{v2a} would not hold,
since $v_{2}(\tfrac{p^a-1}{c})\ge 0$ and $v_{2}(p^a+1)\ge 1$.
Hence, $\frac{p^{a}-1}{c}$ is odd and $p^{a}+1\equiv 2 \pmod {4}$, that is $p^a\equiv 1 \pmod{4}$.
	
Now, assume $t>2$ and notice that $p^{2^{i}a}\equiv 1\pmod{4}$ for all $i\in I_{t-1}'$, in the notation of \eqref{notation}. This implies that $v_{2}(p^{2^{i}a}+1)=1$ for all $i\in I_{t-1}'$. 
Thus, we obtain that $t> v_{2}(\tfrac{p^a-1}{c}) + v_{2}(p^{a}+1)+t-2$, which is equivalent to
	$$2> v_{2}(\tfrac{p^a-1}{c}) + v_{2}(p^{a}+1).$$
By using the same argument as in the case $t=2$, we obtain that $\frac{p^a-1}{c}$ is odd and $p^{a}\equiv 1\pmod{4}$, as desired.
	
\sk 
\noindent ($\Leftarrow$)
Now assume that $p^a\equiv 1\pmod{4}$ and $\frac{p^a-1}{c}$ is odd. By Lemma \ref{red primitive}, it is enough to show that
	$$bc\mid p^{ab}-1 \qquad \quad \text{and} \quad \qquad bc\nmid p^{a\ell}-1 \:\: \text{ for } \:\: \ell \in I_b'.$$ 
First, notice that if $\ell$ is odd, then $\frac{p^{a\ell}-1}{c}$ is also odd, 
since $v_{2}(p^{a\ell}-1)=v_{2}(p^{a}-1)=v_2(c)$ and therefore $b\nmid \frac{p^{a\ell}-1}{c}$ for $\ell$ odd. 
Now, assume that $\ell=2^{h}v$ with $v$ odd and $h\ge 1$. Thus,
$h\le t-1$ since $\ell\le b-1$. We have the following factorization 
	$$p^{a\ell}-1= (p^{av}-1)\prod_{0\le i \le h-1}(p^{2^{i}av}+1).$$
The hypothesis $p^{a}\equiv 1\pmod{4}$ implies that $v_{2}(p^{2^{i}av}+1)=1$ for all $i\in\{0,\ldots,h-1\}$ 
and so we obtain that
	$$v_{2}\Big(\prod_{0\le i \le h-1} (p^{2^{i}av}+1) \Big) = \sum_{0\le i \le h-1} v_{2}(p^{2^{i}av}+1)=h.$$ 
Hence $v_{2}(\frac{p^{a\ell}-1}{c})=h$, since $v_{2}(\frac{p^{av}-1}{c})=0$ ($c$ divides $p^{av}-1$ since $p^{a}-1$ is a divisor of $p^{av}-1$).
Thus, we have that $v_{2}(b)=t > h = v_{2}(\frac{p^{a\ell}-1}{c})$, which implies that $b\nmid \frac{p^{a\ell}-1}{c}$ and then $bc\nmid p^{a\ell}-1$. 
A similar argument shows that $bc\mid p^{ab}-1$. 
Therefore  $bc\dagger p^{ab}-1$ as asserted.	
The final remark is straightforward and the proof is complete. 
\end{proof}

We are now in a position to give the reduction formula for certain Waring numbers $g(k_b, p^{ab})$ in the case when $b$ is a prime power. 
\begin{prop} \label{red pow}
Let $p$ be a prime and suppose that $a,b,c \in \N$ are such that $c\dagger p^{a}-1$, $b=r^t$ for some integer $t\ge 2$ 
with $r$ a prime different from $p$, and $(\frac{p^a-1}{c},r)=1$. 
Then, we have
\begin{equation} \label{fla rt}
g( \tfrac{p^{ar^t}-1}{cr^t}, p^{ar^t} ) = r^{t}  g(\tfrac{p^a-1}c, p^a)
\end{equation}
either if $r=2$ and $p^{a}\equiv 1\pmod{4}$ or else if $r$ is odd and $ord_{b}(p^a)=r^h$ for some $0\le h\le t-1$.
\end{prop}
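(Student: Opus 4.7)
The plan is to reduce the identity to a direct application of Theorem~\ref{waringprod main} with parameter triple $(a, r^t, c)$. Once we verify that $cr^t$ is a primitive divisor of $p^{ar^t}-1$, the equality $g(\tfrac{p^{ar^t}-1}{cr^t}, p^{ar^t}) = r^t \, g(\tfrac{p^a-1}{c}, p^a)$ follows immediately. So the entire content of the proof lies in establishing the primitive divisibility $cr^t \dagger p^{ar^t}-1$ in each of the two scenarios described in the statement.

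For the case $r = 2$, note that $p$ must be odd (since $r \neq p$), $b = 2^t$ with $t \geq 2$, and the coprimality $\gcd(\tfrac{p^a-1}{c}, 2) = 1$ says that $\tfrac{p^a-1}{c}$ is odd. Together with the assumed $p^a \equiv 1 \pmod{4}$, these are precisely the hypotheses of Lemma~\ref{lem pow 2}, which yields $cr^t \dagger p^{ar^t}-1$ directly.

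For the case $r$ odd, the strategy is to check condition (b) of Lemma~\ref{red primitive}, namely that $b \mid u\,\Psi_b(p^a)$ and $b \nmid u\,\Psi_\ell(p^a)$ for every $1 \leq \ell \leq b-1$, where $u = \tfrac{p^a-1}{c}$. Since $b = r^t$ and $\gcd(u, r) = 1$, these two conditions collapse to the purely polynomial divisibility statements $r^t \mid \Psi_{r^t}(p^a)$ and $r^t \nmid \Psi_\ell(p^a)$ for all $\ell \in \{1,\dots, r^t-1\}$. The former follows from Lemma~\ref{lema 1}(a) applied to $x = p^a$, which is valid because $\gcd(p^a, r) = 1$ and the hypothesis $ord_{r^t}(p^a) = r^h$ with $0 \le h \le t-1$ matches exactly what that lemma requires. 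The latter is precisely the content of Proposition~\ref{primitive pow}, whose statement is tailored for this application.

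The principal technical difficulty has already been absorbed into Proposition~\ref{primitive pow}: verifying the non-divisibility of $\Psi_s(p^a)$ by $r^t$ uniformly for every $1 \le s \le r^t-1$ is the delicate step, and without that proposition the odd-$r$ case would require a case-by-case analysis of the exponent structure. Given it, the proof proposed here is a short matter of invoking the correct lemmas in sequence and checking that the hypotheses on $(p, a, c, r, t)$ fit their requirements in each case.
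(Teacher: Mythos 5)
Your proposal is correct and follows essentially the same route as the paper: reduce to verifying $cr^t \dagger p^{ar^t}-1$ so that Theorem~\ref{waringprod main} applies, handle $r=2$ via Lemma~\ref{lem pow 2}, and handle odd $r$ by checking condition ($b$) of Lemma~\ref{red primitive} through Lemma~\ref{lema 1}($a$) and Proposition~\ref{primitive pow}. No gaps; the hypothesis checks you perform (oddness of $\tfrac{p^a-1}{c}$, coprimality of $p^a$ with $r$, the order condition) are exactly the ones the paper uses.
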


\noindent \textit{Note.} The case $r=2$ in the proposition generalizes Corollary \ref{coro1}.

\begin{proof}
By Theorem \ref{waringprod main}, it is enough to prove that in both cases we have $bc \dagger p^{ab}-1$ and hence \eqref{gk=bgu gen} holds, which takes the form \eqref{fla rt}. If $r=2$ and $p^a\equiv 1 \pmod 4$, the result follows directly by Lemma \ref{lem pow 2}, since $(\frac{p^a-1}{c},r)=1$ implies that $\frac{p^a-1}{c}$ is odd.
	
Now, assume that $r$ is odd and $ord_{b}(p^a)=r^h$ with $0\le h\le t-1$. 
Let us consider $m=ab=ar^t$ and $n=bc$, by Lemma \ref{red primitive}, 
it is enough to prove that $n$ divides $p^m-1$ and $n$ does not divide $p^{a\ell}-1$ for all $\ell \in I_{r^t}'$. 
Let $x=p^a$. By hypothesis $(r,\frac{p^a-1}{c})=1$, then these two conditions are equivalent to $r^t\mid \Psi_{r^t}(x)$ and $r^t\nmid \Psi_{\ell}(x)$ for all $\ell \in I_{r^t}'$, respectively (see item ($b$) in Lemma~\ref{red primitive}).
We have $ord_{r^t}(x)=r^h$, by hypothesis, and hence $r^t\mid \Psi_{r^t}(x)$, by part ($a$) in Lemma~\ref{lema 1}. That is, $n\mid p^m-1$. On the other hand, it follows from Proposition~\ref{primitive pow} that $r^t\nmid \Psi_{\ell}(x)$ for all $1\le\ell\le r^{t}-1$. Therefore, $n$ is a primitive divisor of $p^m-1$, as desired.
\end{proof}

We now give the reduction formula \eqref{gk=bgu gen} in its most generality, for any integer $b$ not a power of $2$.
\begin{thm} \label{teo 42}
Let $p$ be a prime and let $a,b,c$ be positive integers such that $c \dagger p^a-1$ and
$b = 2^t r_1^{t_1} \cdots r_s^{t_s}$ with $r_1, \ldots, r_s$ distinct odd primes, $t\ge 0$, $t_1, \ldots, t_s\ge 1$ and $(p,b)=1$. 
If $(\frac{p^a-1}{c},b)=1$ and $ord_{r_{i}^{t_i}}(p^a) = r_i^{h_i}$ with $0\le h_i\le t_i-1$ for all $i=1,\ldots,s$, then the reduction formula \eqref{gk=bgu gen} holds for $t=0,1$ and for any $t\ge 2$ if further $p^{a}\equiv 1 \pmod 4$.
That is, under the above conditions, if $b'=  r_1^{t_1} \cdots r_s^{t_s}$ we have 
\begin{equation} \label{red fla gral even pow}
g( \tfrac{p^{2^{t}ab'}-1}{2^{t}b'c},p^{2^tab'} ) = 2^{t} b'  g(\tfrac{p^{a}-1}{c},p^a).
\end{equation}
\end{thm}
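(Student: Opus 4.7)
The plan is to reduce the claim to verifying the single hypothesis $bc \dagger p^{ab}-1$ of Theorem~\ref{waringprod main}, from which formula \eqref{red fla gral even pow} follows. Set $u=(p^a-1)/c$ and $x=p^a$. Part~$(b)$ of Lemma~\ref{red primitive} combined with $\gcd(u,b)=1$ turns the required primitivity into the two conditions
$$b \mid \Psi_b(x) \qquad \text{and} \qquad b \nmid \Psi_\ell(x) \:\text{ for every }\: \ell \in I_b'.$$
Decompose $b=2^t b'$ with $b'=r_1^{t_1}\cdots r_s^{t_s}$ odd; by the Chinese Remainder Theorem these split into conditions on the coprime factors $2^t$ and $b'$, which I verify separately.

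For $b\mid \Psi_b(x)$, the odd part $b' \mid \Psi_b(x)$ follows from the factorization $\Psi_b(x) = \Psi_{b'}(x^{2^t})\,\Psi_{2^t}(x)$ of Proposition~\ref{Psi pow cond}$(d)$, combined with Lemma~\ref{lema 1}$(b)$ applied to $x^{2^t}$ (the orders $ord_{r_i^{t_i}}(x^{2^t})=r_i^{h_i}$ are preserved because each $r_i$ is odd and therefore coprime to $2^t$). The $2$-part $2^t\mid \Psi_b(x)$ is trivial for $t=0$; for $t=1$ use that $\Psi_{2b'}(x)=\Psi_{b'}(x)\,(x^{b'}+1)$ is even since $x=p^a$ is odd; and for $t\ge 2$ with $p^a\equiv 1\pmod 4$, the Lifting-the-Exponent lemma gives $v_2(\Psi_b(x))=v_2(b)=t$.

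For $b\nmid \Psi_\ell(x)$, fix $\ell\in I_b'$ and split into two exhaustive cases. If $r_i^{t_i}\nmid \ell$ for some $i$, write $\ell = er_i^{t_i}+\ell_0$ with $0<\ell_0<r_i^{t_i}$; a block decomposition identical in spirit to the proof of Proposition~\ref{Psi pow cond}$(c)$, using $r_i^{t_i}\mid \Psi_{r_i^{t_i}}(x)$ from Lemma~\ref{lema 1}$(a)$, yields $\Psi_\ell(x)\equiv \Psi_{\ell_0}(x)\pmod{r_i^{t_i}}$; then Proposition~\ref{primitive pow} gives $r_i^{t_i}\nmid \Psi_{\ell_0}(x)$, whence $b\nmid \Psi_\ell(x)$. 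Otherwise every $r_i^{t_i}$ divides $\ell$, so $b'\mid \ell$, forcing $t\ge 1$ and $\ell=kb'$ with $1\le k\le 2^t-1$. For $t=1$ one has $\Psi_{b'}(x)\equiv b'\equiv 1\pmod 2$; for $t\ge 2$ with $p^a\equiv 1\pmod 4$, LTE gives $v_2(\Psi_{kb'}(x))=v_2(kb')=v_2(k)\le t-1<t$; in either subcase $2^t\nmid \Psi_\ell(x)$ and hence $b\nmid \Psi_\ell(x)$.

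The main technical obstacle is precisely the handling of the $2$-primary factor $2^t$: Proposition~\ref{primitive pow} is stated only for odd primes, so for $r=2$ the analogous non-divisibility must be established by explicit $2$-adic valuation arguments. This is the source of the extra hypothesis $p^a\equiv 1\pmod 4$ when $t\ge 2$, while for $t=0,1$ parity alone suffices (paralleling the role of this condition in Lemma~\ref{lem pow 2}). Combining the two verifications gives $bc\dagger p^{ab}-1$, and Theorem~\ref{waringprod main} then produces the reduction formula \eqref{red fla gral even pow}.
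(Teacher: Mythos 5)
Your proof is correct, but it takes a genuinely different route from the paper. You verify the global primitivity condition $bc \dagger p^{ab}-1$ head-on: reduce via Lemma~\ref{red primitive}$(b)$ and $\gcd(u,b)=1$ to the pair of conditions $b\mid \Psi_b(x)$, $b\nmid\Psi_\ell(x)$, split by CRT into the coprime factors $2^t$ and $b'$, handle the odd part with Lemma~\ref{lema 1} and Proposition~\ref{primitive pow}, handle the $2$-part by explicit $2$-adic valuations, and then invoke Theorem~\ref{waringprod main} exactly once. The paper never establishes $bc\dagger p^{ab}-1$ for composite $b$ with several prime factors; instead it proves the $t=0$ case by induction on $s$, peeling off one prime power at a time via the factorization $\Psi_b(p^a)=\Psi_{r_s^{t_s}}(p^{ab_s})\Psi_{b_s}(p^a)$ and telescoping the reduction formula through the tower $\ff_{p^a}\subset\ff_{p^{ab_s}}\subset\ff_{p^{ab}}$, applying Proposition~\ref{red pow} as a black box at each stage (which requires checking that the intermediate parameter $u'=\tfrac{u}{b_s}\Psi_{b_s}(p^a)$ is an integer coprime to $r_s$ and that the relevant orders are preserved); the cases $t=1$ and $t\ge 2$ are then bolted on via Corollary~\ref{coro1} and Proposition~\ref{red pow} with $r=2$. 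Your approach buys a cleaner, single-pass argument, yields the arithmetically interesting statement $bc\dagger p^{ab}-1$ as an explicit byproduct, and avoids bookkeeping of intermediate parameters; the paper's approach buys maximal reuse of the prime-power proposition and keeps all number theory localized to prime powers. Two small points you should flesh out: the congruence $\Psi_\ell(x)\equiv\Psi_{\ell_0}(x)\pmod{r_i^{t_i}}$ needs the identity $\Psi_{er_i^{t_i}+\ell_0}(x)=\Psi_e(x^{r_i^{t_i}})\Psi_{r_i^{t_i}}(x)+x^{er_i^{t_i}}\Psi_{\ell_0}(x)$ together with Lemma~\ref{lema 1}$(a)$ and $x^{r_i^{t_i}}\equiv 1\pmod{r_i^{t_i}}$ (Proposition~\ref{Psi pow cond}$(c)$ as stated blocks by the order $r^h$, not by $r^{t}$, so it does not apply verbatim); and the Lifting-the-Exponent step, while standard, is not cited in the paper --- it is equivalent to the factorization $p^{2^{j}a}-1=(p^{a}-1)\prod_{i}(p^{2^{i}a}+1)$ already exploited in the proof of Lemma~\ref{lem pow 2}, and you could phrase it that way to stay self-contained.
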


\begin{proof}
Assume first that $t=0$ (hence $b=b'$ is odd). In this case we  prove \eqref{red fla gral even pow} by induction on $s$. 
The case $s=1$ is a direct consequence of Proposition \ref{red pow}. 
So, assume that $s>1$. 
Recall that if we put $u=\frac{p^a-1}{c}$, then 
$\tfrac{p^{ab}-1}{bc}= \tfrac{u}{b}\Psi_{b}(p^a)$, by Remark \ref{alternative expression}.
By $(d)$ of Lemma~\ref{Psi pow cond}, if we put $b_{s}=\frac{b}{r_{s}^{t_s}}$ we have that 
	$$\Psi_{b}(p^a) = \Psi_{b_{s}r_{s}^{t_s}}(p^a) = \Psi_{r_{s}^{t_s}}(p^{ab_s}) \Psi_{b_s}(p^a).$$ 
Let $u'=\frac{u}{b_s}\Psi_{b_s}(p^a)$. Note that $u' \in \Z$, 
since $b_s\mid \Psi_{b_s}(p^a)$ by ($b$) in 
Lemma~\ref{lema 1}, thus we have
	$$g(\tfrac{u}{b}\Psi_{b}(p^a),p^{ab})=g(\tfrac{u'}{r_{s}^{t_s}}\Psi_{r_{s}^{t_s}}(p^{ab_s}), p^{ab_s r_{s}^{t_s}}) .$$ 
Since $ord_{r_{s}^{t_s}}(p^a)$ is a power of $r_s$, then the order of $p^{ab_s}$ is a power of $r_s$ as well. 
	
Now we prove that $u'$ and $r_{s}$ are coprime. Since the order of $p^a$ modulo $r_s^{t_s}$ is a power of $r_s$, then the order of $p^{a}$ modulo $r_s$ is $1$, i.e\@ $p^{a}\equiv 1\pmod{r_s}$. Thus,  
$\Psi_{b_s}(p^a)\equiv b_s \pmod{r_s}$ 
and hence $r_s\nmid \Psi_{b_s}(p^a)$ since $(b_s,r_s)=1$. On the other hand, by hypothesis, $r_s$ is coprime with $u$. 
Therefore $r_s$ is coprime with $u'$ since $u'$ is a divisor of $u \Psi_{b_s}(p^a)$. 
	
By Proposition \ref{red pow} we obtain   
	$$g(\tfrac{u'}{r_{s}^{t_s}}\Psi_{r_{s}^{t_s}}(p^{ab_s}),p^{ab_s r_{s}^{t_s}}) = r_{s}^{t_s}  g(u',p^{ab_s}).$$
By the inductive hypothesis we have $g(u',p^{ab_s})=b_s g(u,p^a)$, and hence 
	$$g(\tfrac{p^{ab}-1}{bc},p^{ab})=g(\tfrac{u}{b}\Psi_{b}(p^a),p^{ab}) = r_{s}^{t_s}b_s  g(u,p^a) = b  g(u,p^a),$$ 
as we wanted to see. 
	
Suppose now that $t=1$. Since $b'=\frac b2$, by Corollary \ref{coro1} it is enough to show that $\frac{p^{ab'}-1}{b'c}$ is odd or, equivalently, that $v_{2}(b'c)= v_{2}(p^{ab'}-1)$. 
By hypothesis $(\frac{p^{a}-1}{c},b)=1$ and so $\frac{p^{a}-1}{c}$ is odd, that is $v_{2}(c)=v_{2}(p^{a}-1)$. 
Let $\ell=v_{2}(c)$, then $\ell =v_{2}(b'c)$ since $b'$ is odd. On the other hand, since
$p^{a}\equiv 1\pmod{2^{\ell}} $ then $p^{ab'}\equiv 1\pmod{2^{\ell}}$. It is enough to show that
$2^{\ell+1}\nmid p^{ab'}-1$. Since $\mathbb{Z}_{2^{\ell+1}}^*$ has order $2^{\ell}$, then the order of all its elements is a power of $2$. 
By taking into account that $p^{a}\in \mathbb{Z}_{2^{\ell+1}}^*$ and $p^{a}\not \equiv 1 \pmod{2^{\ell+1}}$, 
we have that $p^{ab'}\not \equiv 1 \pmod{2^{\ell+1}}$, otherwise $b'$ is divisible by $2$ which cannot happen because $b'$ is odd.
Hence $v_{2}(p^{ab'}-1)=\ell=v_{2}(b'c)$, that is $\frac{p^{ab'}-1}{b'c}$ is odd, as desired. 
	
Finally, take $t\ge 2$. By Proposition \ref{red pow}, it is enough to show that $p^{ab'}\equiv1 \pmod 4$ and $\frac{p^{ab'}-1}{b'c}$ is odd where $b'=\frac{b}{2^t}$. 
The same argument as in the above paragraph implies that $\frac{p^{ab'}-1}{b'c}$ is odd, since $\frac{p^{a}-1}{c}$ is odd by hypothesis.
On the other hand, clearly the hypothesis $p^{a}\equiv 1\pmod{4}$ implies that $p^{ab'}\equiv 1\pmod{4}$.
Expression \eqref{red fla gral even pow} is now a direct consequence of the item ($a$) of Proposition \ref{red pow} and the case $t=0$.
\end{proof}

Recall that if $b=r_1^{t_i} \cdots r_s^{t_s}$ is the prime decomposition of $b$ then the radical of $b$ is $rad \, (b)=r_1 \cdots r_s$. As a direct consequence of the theorem we have the following.

\begin{coro} \label{fr rad b}
Let $p$ be a prime and let $a,b,c \in \N$ such that $c \dagger p^a-1$, $p\nmid b$ and $b>1$ is not a power of $2$.
Thus, we have: 
\begin{enumerate}[$(a)$]
	\item If $\varphi(rad \, (b)) \mid  a$ and $(\frac{p^a-1}{c},b)=1$, then formula \eqref{gk=bgu gen} holds. \msk
	
	\item If $(\frac{p^{\varphi(rad \, (b))}-1}{c},b)=1$, then one can also replace $a$ by $\varphi(rad \, (b))$ in \eqref{gk=bgu gen}, that is
	\begin{equation} \label{g coro}
	g \big( \tfrac{p^{b \varphi(rad \, (b))}-1}{bc}, p^{b \varphi(rad \, (b))} \big) = 
	b g(\tfrac{p^{\varphi(rad\, (b))}-1}{c},p^{\varphi(rad\, (b))}).
	\end{equation}
\end{enumerate}
\end{coro}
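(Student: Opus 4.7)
The plan is to deduce both parts of the statement from Theorem \ref{teo 42} by verifying its hypotheses. Write $b = 2^t r_1^{t_1} \cdots r_s^{t_s}$ with distinct odd primes $r_1,\ldots,r_s$; since $b$ is not a power of $2$, we have $s\ge 1$. Then $rad \, (b) = r_1 \cdots r_s$ or $2 r_1 \cdots r_s$ according to whether $t=0$ or $t\ge 1$, so in either case
$$\varphi(rad \, (b)) = (r_1-1)\cdots (r_s-1),$$
which is an even integer.

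For part $(a)$, the hypotheses $c\dagger p^a-1$, $(p,b)=1$ and $(\tfrac{p^a-1}{c},b)=1$ of Theorem \ref{teo 42} are already given, so the nontrivial point is to derive the order condition $ord_{r_i^{t_i}}(p^a)=r_i^{h_i}$ with $0\le h_i\le t_i-1$ for every $i$. Since $(r_i-1)\mid \varphi(rad \, (b))\mid a$, Fermat's little theorem yields $p^a\equiv 1\pmod{r_i}$. In the cyclic group $(\Z/r_i^{t_i}\Z)^*$ of order $r_i^{t_i-1}(r_i-1)$, the kernel of the reduction map $(\Z/r_i^{t_i}\Z)^* \to (\Z/r_i\Z)^*$ is the unique subgroup of order $r_i^{t_i-1}$; since $p^a$ lies in this kernel, its order divides $r_i^{t_i-1}$, and being a power of $r_i$ strictly smaller than $r_i^{t_i}$, it equals $r_i^{h_i}$ for some $0\le h_i\le t_i-1$, as required.

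The remaining hypothesis of Theorem \ref{teo 42} needing attention is $p^a\equiv 1\pmod{4}$ when $t\ge 2$. In that case $2\mid b$ together with $(p,b)=1$ force $p$ to be odd, while $s\ge 1$ makes $\varphi(rad \, (b))$ even so that $a$ is even; a standard computation then yields $p^a\equiv 1\pmod{8}$, and in particular $p^a\equiv 1\pmod{4}$. Theorem \ref{teo 42} now delivers \eqref{red fla gral even pow}, which is precisely \eqref{gk=bgu gen} under the conditions of part $(a)$.

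Part $(b)$ is then an immediate specialization of part $(a)$ obtained by replacing $a$ by $\varphi(rad \, (b))$: the divisibility $\varphi(rad \, (b)) \mid \varphi(rad \, (b))$ is trivial, and the coprimality $(\tfrac{p^{\varphi(rad \, (b))}-1}{c},b)=1$ is assumed. The only step that takes any real care is the order-of-$p^a$ claim verified above, but this is entirely elementary given the cyclicity of $(\Z/r_i^{t_i}\Z)^*$, so I do not foresee any serious obstacle.
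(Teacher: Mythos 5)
Your proposal is correct and follows essentially the same route as the paper: both reduce to Theorem \ref{teo 42} by checking that $p^a\equiv 1\pmod 4$ (from $a$ even) and that $ord_{r_i^{t_i}}(p^a)$ is a power of $r_i$ at most $r_i^{t_i-1}$, and then obtain $(b)$ by specializing $a=\varphi(rad\,(b))$. The only cosmetic difference is that you derive the order condition from the kernel of the reduction map $(\Z/r_i^{t_i}\Z)^*\to(\Z/r_i\Z)^*$, whereas the paper applies Euler's theorem to show $p^{a r_i^{t_i-1}}\equiv 1\pmod{r_i^{t_i}}$ directly; these are equivalent.
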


\begin{proof}
Let $x=p^a$. 
Notice that since $b>1$ is not a power of $2$, we have that $\varphi(rad(b))$ is even. 
Thus $a$ is even and so we have that $x\equiv 1\pmod 4$.
By Theorem \ref{teo 42} it is enough to show that $ord_{r_{i}^{t_i}}(x)=r_{i}^{h_i}$ for some $0\le h_i\le t_{i}-1$ for every $i$, where $r_i$ are the odd prime factors of $b$ with $v_{r_i}(b)=t_i$. 
Note that 
$\varphi(r_{i}^{t_i})=r_{i}^{t_i-1}(r_i-1)$ and 
$$\varphi(rad(b)) = \varphi(r_1 \cdots r_\ell) = (r_1-1)\cdots (r_\ell-1).$$ 
Since $r_i-1 \mid a$ for $i=1,\ldots,s$, because $r_i-1\mid\varphi(rad(b))$ and $\varphi(rad(b))\mid a$ by hypothesis, the Euler-Fermat's theorem implies that 
$$x^{r_{i}^{t_i-1}} = p^{ar_{i}^{t_i-1}} = (p^{\varphi(r_i^{t_i})})^{\frac{a}{r_i-1}}\equiv 1 \pmod {r_{i}^{t_i}}$$ 
for all $i=1,\ldots,s$.
This implies that $ord_{r_i^{t_i}}(x)\mid r_i^{t_i}$ and then, for each $i=1,\ldots,s$ there is some $0\le h_i \le t_i -1$ such that $ord_{r_i^{t_i}}(x)=r_i^{h_i}$, thus proving ($a$).

The statement in ($b$) follows directly by taking $a=\varphi(rad\, (b))$ in case ($a$).
\end{proof}
\black

\begin{exam}
Let $b=3^{2} 5^{2}=225$, hence $a=\varphi(rad(b))=\varphi(15)=8$, and take $c=p^a-1$.
Thus, by the previous corollary, we have that 
$$g(\tfrac{2(p^{1800}-1)}{225(p^{8}-1)}, p^{1800}) = 225 g(2,p^{1800}) = 450$$ 
for all prime $p>5$, since $g(2,q)=2$ for all prime power $q$. \hfill $\lozenge$
\end{exam}

\subsection*{Kononen's result as a particular case} 
There are many upper bounds for general Waring numbers, but very few exact formulas (see \S2 in \cite{PV3} for a brief survey).
From these exact formulas, the one given by Kononen in 2010 stands up, generalizing a previous result of Winterhof and van de Woestijne of the same year (\cite{Win2}). Kononen proved (\cite{KK}) that if $p$ and $r$ are primes such that $p$ is a primitive root modulo $r^m$ for some $m>1$ then 
\begin{equation} \label{kononen1}
g \big( \tfrac{p^{\varphi(r^m)}-1}{r^m},p^{\varphi(r^m)} \big) = \tfrac 12 (p-1)\varphi(r^m)
\end{equation}
where $\varphi$ denotes Euler's totient function. 
If, in addition, $p$ and $r$ are odd primes, then we have
\begin{equation} \label{kononen2}
g \Big(\tfrac{p^{\varphi(r^m)}-1}{2r^m},p^{\varphi(r^m)} \Big) =
\begin{cases}
r^{m-1} \lfloor \tfrac{pr}4 -\tfrac{p}{4r}\rfloor \qquad \quad \text{if } r<p,		\\[1em]
r^{m-1} \lfloor \tfrac{pr}4 -\tfrac{r}{4p}\rfloor \qquad \quad \text{if } r\ge p. 
\end{cases}
\end{equation}
The case $m=1$ is also true and it was more challenging than the general case $m>1$. 
In 2001, Winterhof found a lower bound (see \cite{W}). Some years later, by using some norm techniques, Winterhof and van De Woestijne found the formulas \eqref{kononen1} and \eqref{kononen2} for $m=1$.
Thus, Kononen used the result of the case $m=1$, 
to prove the expressions \eqref{kononen1} and \eqref{kononen2} for any $m\ge 1$. 

We now show how \eqref{kononen1} with $m>1$, 
follows from the case $m=1$ and the reduction formula 
given in Theorem \ref{waringprod main}.

Notice that if $r=2$ , necessarily $m=2$, since there is no primitive roots modulo $2^{m}$ with $m\ge 3$.
This case is exactly Corollary \ref{coro5}, since $p$ is a primitive root modulo $4$ if and only if $p\equiv 3\pmod{4}$.
Suppose now that $p,\,r$ are primes with $r$ odd such that $p$ is a primitive root modulo $r^m$. 
Take 	
	$$b=r^{m-1} \qquad \text{and} \qquad c=r.$$ 
Assume that $m=1$ holds. We have that $c$ is a primitive divisor of $p^{r-1}-1$,
since the Waring number $g(\frac{p^{r-1}-1}{r},p^{r-1})$ exists (see Lemma 3.1 in \cite{PV3}). 

On the other hand, we have that $(\frac{p^{r-1}-1}{r},b)=1$. In fact, if this does not happen then $r$ divides $\frac{p^{r-1}-1}{r}$. In this case, we obtain that $r^{2}\mid p^{r-1}-1$ and thus $p$ is not a primitive element modulo $r^2$. This is a contradiction, since an integer is primitive modulo $r^m$ with $m \ge 2$ if and only if it is primitive modulo $r^2$ 
(when $r$ is an odd prime). 

Notice that $\varphi(rad \, (b)) = r-1$ hence $p^{\varphi(r^m)} = p^{b\varphi(rad \, (b))}$ and thus 
$ \tfrac{p^{\varphi(r^m)}-1}{r^m} = \tfrac{p^{b \varphi(rad \, (b))}-1}{bc}$. 
Therefore, we have
	$$g \Big( \tfrac{p^{\varphi(r^m)}-1}{r^m},p^{\varphi(r^m)} \Big) = 
		g \big( \tfrac{p^{b \varphi(rad \, (b))}-1}{bc}, p^{b \varphi(rad \, (b))} \big) = b g(\tfrac{p^{\varphi(rad \, (b))}-1}{c},p^{\varphi(rad \, (b))})$$
where in the last equality we have used Corollary \ref{fr rad b}. 
Now, we have 
\begin{eqnarray*}
	b g(\tfrac{p^{\varphi(rad \, (b))}-1}{c},p^{\varphi(rad \, (b))}) &=& r^{m-1} g(\tfrac{p^{r-1}-1}{r},p^{r-1}) 
=	r^{m-1} \tfrac{(p-1)(r-1)}{2} = \tfrac 12 (p-1)\varphi(r^m).
\end{eqnarray*}

In the same way, by taking  $b=r^{m-1}$ and $c=2r$, one can obtain \eqref{kononen2} for $m>1$, from the reduction formula  \eqref{g coro} and the case $m=1$, as desired.

\section{Cyclic codes and Waring numbers equal to 2}
Here we  use strongly regular GP-graphs, and a relation with 2-weight cyclic codes, to find new pairs $(k,q)$ such that the corresponding Waring numbers are equal to 2. 
In the next section, we will use these values in combination with the reduction formulas previously obtained to find pairs $(k,q)$ such that $g(k,q)$ is even.

We first give a list of known pairs $(k,q)$ such that $g(k,q)=2$ 
(see for instance \S 2 in \cite{PV3}). 
If $p$ is a prime and $q$ is a prime power we have the following.

\msk

\subsection*{List 1: some known cases of $g(k,q)=2$}
\hrule \vspace{1.5mm}
\begin{enumerate}[$(a)$] \vspace{1.5mm} \hrule  
	\item $g(2,p)=2$ for any odd $p$ and $g(3,p)=2$ for $p\equiv 1 \pmod 3$ with $p\ne 7$ (Small `77, \cite{Sm}). \sk

	\item $g(k,q)=2$ for $2 \le k< \sqrt[4]q +1$ and $k\mid q-1$ (Small `77, \cite{Sm}). In particular, $g(2,q)=2$ for every $q$ odd, since $2\le \sqrt[4]{3}+1$, thus generalizing ($a$). \sk
	
	\item $g(k,q)=2$ if some complicated conditions, depending on the $p$-adic weight of $k$ and $q$, are satisfied (Moreno-Castro `05, Theorem~3.3 and Corollary 3.4 in \cite{MC1}). \sk
	
	\item $g(k,p^{2\ell s})=2$ for $k\mid p^{\ell}+1$ and $s\ne 1$ 
	(Moreno-Castro `08, \cite{MC}, for $k\ne p^{\ell}+1$; Podest\'a-Videla `18, \cite{PV}, for $k=p^{\ell}+1$).
	
	\item $g(\tfrac{p^\ell+1}2, p^{2\ell})=2$ for any $p$ odd (($i$) in Example \ref{otro ejemplo}).
\end{enumerate}
\hrule

\sk
\begin{rem}
($i$) The fact that $g(2,q)=2$ for all odd prime power $q$ can be proved in a graph theoretical way. 
If $\G$ is a graph (directed or undirected), then the diameter $\delta(\G)$ is less than or equal than the number of different eigenvalues of $\G$. In general, the non-principal eigenvalues of $\G(k,q)$ are exactly all the Gaussian periods (see \cite{PV3}). 
It is well known that if $k=2$, there are only 
two such periods and thus $\G(2,q)$ has exactly $3$ different eigenvalues. This implies that $\delta(\G(2,q))\le 2$; but since $\G(2,q)$ is not the complete graph, we must then have that $g(2,q)=\delta(\G(2,q))=2$.

\noindent ($ii$) 
Notice that in ($d$) of List 1, for $k=p^\ell+1$, if $s>2$ then all of the pairs $(k,q)=(p^\ell+1,p^{2\ell s})$ satisfy $k<\sqrt[4]{q}+1$ and so we already know that $g(k,q)=2$ by ($b$) in List~1.
However, if $s=2$, we obtain some new pairs $(k,q)$ satisfying $g(k,q)=2$, since in this case we have that 
$k=\sqrt[4]{q}+1$. This add the limit case in the bound in ($b$) of List 1 for $k=p^\ell+1$.
In other words, all the values 
	$$g(p^\ell+1, p^{4\ell})=2$$ 
for $p$ prime and $\ell\in \N$ are all new and not covered by ($b$) in List 1.
\end{rem}

\begin{exam}
By ($d$) in List 1, we found the values $g(p^\ell+1, p^{2\ell s})=2$ for $s\ge 2$. For instance, for $\ell=1,2,3,4$ and $s=2$ we have the new values 
	$$g(p+1,p^{4}) = g(p^2+1,p^{8}) = g(p^3+1, p^{12}) = g(p^4+1, p^{16}) = 2.$$
Thus, for $p=2,3,5,7$ we respectively have
	\begin{gather*}
	g(3,2^{4}) =g(5, 2^{8}) = g(9,  2^{12}) = g(17, 2^{16}) = 2, \\
	g(4,3^{4}) =g(10,3^{8}) = g(28, 3^{12}) = g(82, 3^{16}) = 2, \\
	g(6,5^{4}) =g(26,5^{8}) = g(126,5^{12}) = g(626,5^{16}) = 2, \\
	g(8,7^{4}) =g(50,7^{8}) = g(344,7^{12}) = g(2402,7^{16}) = 2.
	\end{gather*}	
\end{exam}

\subsubsection*{Strongly regular GP-graphs and cyclic codes}
A \textit{strongly regular graph} (\textit{srg} for short) with parameters $v, \kappa, e, d$, denoted by $srg(v, \kappa, e, d)$, is a 
$\kappa$-regular graph with $v$ vertices such that for any pair of vertices $x,y$ the number of vertices adjacent (resp.\@ non-adjacent) to both $x$ and $y$ is $e\ge 0$ 
(resp.\@ $d\ge 0$). 
Strongly regular graphs are distance regular graphs with diameter $\delta=2$ if $d\ne 0$. Moreover, they are characterized by 
their spectra in the connected case. More precisely, if $\G$ is a connected graph, then $\G$ is a strongly regular graph 
if and only if it has three distinct eigenvalues. The following is straightforward.

\begin{prop} \label{Waringsrg}
	Let $\Gamma = \G(k,q)$ be a generalized Paley graph. If $\Gamma$ is a connected strongly regular graph, then $g(k,q)=2$.
\end{prop}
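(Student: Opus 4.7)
The plan is very direct, and indeed the text signals this by calling the statement ``straightforward.'' First, because $\Gamma$ is assumed connected, the identification in \eqref{g=d} gives $g(k,q) = \delta(\Gamma)$, so the proposition reduces to showing that the diameter of $\Gamma$ equals~$2$.

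Next, I would combine two facts already recalled just before the statement. On the one hand, a connected strongly regular graph has exactly three distinct eigenvalues. On the other hand, a classical spectral fact asserts that any graph whose adjacency matrix has $t$ distinct eigenvalues satisfies $\delta(\Gamma) \le t-1$; this follows from the linear independence of the matrices $I, A, A^2, \ldots, A^{\delta(\Gamma)}$ (two vertices at distance exactly $\delta$ witness a nonzero entry of $A^{\delta}$ that is zero in all lower powers) together with the fact that the minimal polynomial of $A$ has degree equal to the number of distinct eigenvalues. Putting these together immediately yields $\delta(\Gamma) \le 2$.

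To close the argument I would exclude the possibility that $\delta(\Gamma) \le 1$. Since $\Gamma$ has $q \ge 2$ vertices, we have $\delta(\Gamma) \ge 1$, and $\delta(\Gamma)=1$ would force $\Gamma = K_q$, i.e.\ $\Gamma = \G(1,q)$; but $K_q$ has only two distinct eigenvalues ($q-1$ and $-1$), contradicting the three-eigenvalue characterization. Hence $\delta(\Gamma) = 2$ and therefore $g(k,q) = 2$, as desired.

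I do not anticipate any real obstacle: the proof is a three-line bookkeeping consequence of \eqref{g=d}, the spectral characterization of strongly regular graphs, and the standard spectral diameter bound. The only ingredient that might merit an explicit citation (rather than being written out) is the $\delta(\Gamma) \le t-1$ bound, which is standard in algebraic graph theory.
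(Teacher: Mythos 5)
Your proof is correct and follows essentially the same route as the paper: the paper's own proof simply invokes the standard fact that a connected strongly regular graph has diameter $2$ and then applies \eqref{g=d}. You merely spell out that diameter fact via the three-eigenvalue characterization and the spectral bound $\delta(\Gamma)\le t-1$ (together with ruling out $K_q$), which is a fine, if more detailed, justification of the same step.
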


\begin{proof}
If $\Gamma$ is a connected strongly regular graph, then $\delta(\Gamma)=2$, and by \eqref{g=d} we then have that 
$g(k,p^m)=2$.
\end{proof}

Let $q=p^m$ with $p$ prime and $m \in \N$.  If $k\mid \tfrac{q-1}{p-1}$, the spectrum of $\G(k,q)$ is integral (see Theorem 2.1 in \cite{PV2}). In this case, we have found (see \S5 in \cite{PV2}) that there is a direct relationship between the spectra of GP-graphs $\G(k,q)$ and the weight distribution of the irreducible cyclic $p$-ary codes 
$$\CC(k,q) = \big\{ \big(\Tr_{q/p} (\gamma \omega ^{ki})\big)_{i=0}^{n-1} : \gamma \in \ff_q \big\}, $$ 
where $\omega$ is a primitive element of $\ff_q$ over $\ff_p$ and $n=\frac{q-1}{k}$. More precisely, the eigenvalues and their multiplicities are in a 1-1 correspondence with the weights and their frequencies.

In \cite{SW}, Schmidt and White conjectured that all two-weight irreducible cyclic codes over $\ff_p$, with $k\mid \tfrac{p^m-1}{p-1}$, belong to one of the following disjoint families:

$\bullet$ \textit{Subfield subcodes}: correspond to the powers of the form $k=\frac{p^m-1}{p^{a}-1}$ with $a<m$. 
In this case $\G(k,p^m)$ is not connected and hence we do not consider it. 

$\bullet$ \textit{Semiprimitive codes:} 
correspond to those $k>1$ such that $-1$ is a power of $p$ modulo $k$, that is $k \mid p^t+1$ for some $t$.

$\bullet$ \textit{Exceptional codes}: correspond to the 11 pairs in Table 1 below. 
In this case we say that $(k,p^m)$ is an \textit{exceptional pair}.
\renewcommand{\arraystretch}{1.5}
\begin{table}[H]  \caption{Exceptional pairs} 
\begin{tabular}{||c||c|c|c|c|c|c|c|c|c|c|c||} 
\hline 
$k$ 	& $11$ & $19$ & $35$ & $37$ & $43$ & $67$ & $107$ & $133$ & $163$ & $323$ & $499$ \\ 
\hline
$p^m$   & $3^5$ & $5^9$ & $3^{13}$ & $7^9$ & $11^7$ & $17^{33}$ & $3^{53}$ & $5^{18}$ & $41^{81}$ & $3^{144}$ & $5^{249}$ \\
\hline
\end{tabular}
\end{table}

In terms of graphs, the Schmidt and White's conjecture says that there are only three different kind of GP-graphs which are strongly regular, because their related graphs have only three eigenvalues (two nontrivial eigenvalues which correspond with the nonzero weights of the corresponding code).

We now take a closer look to the semiprimitive case, i.e.\@ when $k>1$ and $q=p^m$ with $-1$ is a power of $p$ modulo $k$. 
Since $k\mid p^t+1$ for some $t$ and $k\mid p^m-1$, we have that $k \mid (p^m-1,p^t+1)$.
It is well-known that if $b$ is an integer then 
$$(b^{m}-1,b^t+1) = \begin{cases}
1 & \qquad \text{if $\frac{m}{(m,t)}$ is odd and $b$ is even}, \\[1mm]
2 & \qquad \text{if $\frac{m}{(m,t)}$ is odd and $b$ is odd},\\[1mm]
b^{(m,t)}+1 & \qquad  \text{if $\frac{m}{(m,t)}$ is even}.
\end{cases}$$ 

Since $k>1$, if $p=2$ we have that $\frac{m}{(m,t)}$ is even. Thus, $m$ is even and $k\mid 2^{\ell}+1$ with $\frac{m}{\ell}$ even ($\ell=(m,t)$). In the same way, 
if $p$ is odd, 
we get that $k=2$ and $m$ even, since $k\mid \frac{p^m -1}{p-1}$ or $k\mid p^{\ell}+1$ with $m$ even such that $\ell\mid m$ with $\frac{m}{\ell}$ even.

Thus, we have proved that if $(k,q)$ is a semiprimitive pair with $q=p^m$, then either: 
\begin{equation} \label{semipr}
\text{$k=2$, $p$ is odd and $m$ is even} \qquad \text{or} \qquad \text{$k \mid p^{\ell}+1$ with $\ell\mid m$ and $\tfrac{m}{\ell}$ even.}
\end{equation}
In any of these cases, if $k\ne p^{\frac m2}+1$ we say that $(k,p^m)$ is a \textit{semiprimitive pair} (see Definition 3.1 in \cite{PV2}).

\sk
The next result gives the Waring numbers, $g(k,q)=2$, associated to all known strongly regular GP-graphs $\G(k,q)$ with $k\mid \tfrac{q-1}{p-1}$, thus extending the cases in List 1.

\begin{thm} \label{srg waring}
	Let $q=p^m$ with $p$ prime and $m\in \N$. Then $g(k,q)=2$ for $(k,q)$ either a semiprimitive pair or an exceptional pair.
\end{thm}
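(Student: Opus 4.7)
The strategy is to combine Proposition~\ref{Waringsrg} with the correspondence between irreducible cyclic codes and generalized Paley graphs recalled in the preceding discussion. Concretely, for any pair $(k,q)$ with $k\mid\tfrac{q-1}{p-1}$ the spectrum of $\G(k,q)$ is integral, and the nontrivial eigenvalues of $\G(k,q)$ are in bijection with the nonzero weights of $\CC(k,q)$, preserving frequencies and multiplicities. So if $\CC(k,q)$ is a \emph{two-weight} irreducible cyclic code, then $\G(k,q)$ has exactly three distinct eigenvalues and therefore is strongly regular once it is connected. The statement then reduces to two things: (i) identify that semiprimitive and exceptional pairs give rise to two-weight codes $\CC(k,q)$, hence to GP-graphs with three distinct eigenvalues, and (ii) verify connectedness, after which Proposition~\ref{Waringsrg} yields $g(k,q)=2$.

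For point (i), I would simply invoke that, by construction, both the semiprimitive family and the exceptional family listed from \cite{SW} consist of two-weight irreducible cyclic codes $\CC(k,q)$, and use Theorem~5.1 (or the cited \S5) of \cite{PV2} to transfer this into ``$\G(k,q)$ has three distinct eigenvalues''. This step is essentially a translation and should not require new arguments.

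For point (ii), I need $n=\tfrac{q-1}{k}$ to be a primitive divisor of $q-1$; by the connectedness criterion stated after \eqref{Gammas}. For a semiprimitive pair, the normalization excludes the ``trivial'' case $k=p^{m/2}+1$ (which is exactly the case where $\G(k,q)$ decomposes as a Hamming graph and is disconnected in the relevant sense); using the description in \eqref{semipr}, namely $k\mid p^{\ell}+1$ with $\ell\mid m$ and $m/\ell$ even, I will check that $\tfrac{q-1}{k}\nmid p^t-1$ for $t<m$ by a direct order computation of $p$ modulo $\tfrac{q-1}{k}$. For exceptional pairs, since there are only $11$ of them (Table 1), connectedness can be checked case by case, or more uniformly by noting that the associated two-weight codes in the exceptional family of \cite{SW} are by construction not subfield subcodes, which are precisely the pairs for which $\G(k,q)$ fails to be connected.

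Once connectedness is settled in both cases, $\G(k,q)$ is a connected graph with exactly three distinct eigenvalues, hence a connected strongly regular graph, and Proposition~\ref{Waringsrg} gives $g(k,q)=2$. The step I expect to be the most delicate is the systematic verification of connectedness for semiprimitive pairs, namely the check that $\tfrac{q-1}{k}$ is a primitive divisor of $q-1$ under the arithmetic constraints on $(k,\ell,m)$; everything else is either a direct application of the cited correspondence or a table lookup for the eleven exceptional pairs.
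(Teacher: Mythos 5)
Your proposal is correct and follows essentially the same route as the paper: connectedness of $\G(k,q)$ together with the two-weight/three-eigenvalue correspondence yields a connected strongly regular graph, and Proposition~\ref{Waringsrg} then gives $g(k,q)=2$ (the paper differs only cosmetically, dispatching the semiprimitive subcases with $\ell\neq \tfrac m2$ by citing \cite{MC} and \cite{PV} instead of rerunning the spectral argument, and checking connectedness of the exceptional pairs via the multiplicity of the eigenvalue $n=\tfrac{q-1}{k}$). The one substantive point you omit is that strong regularity and Proposition~\ref{Waringsrg} presuppose an \emph{undirected} graph, so for odd $p$ one must also verify that $2k\mid q-1$ (equivalently that $\tfrac{q-1}{k}$ is even), a check the paper carries out explicitly in the case $k\mid p^{\frac m2}+1$; this holds for all the pairs in question but should be recorded.
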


\begin{proof}
First, let $(k,q)$ be a semiprimitive pair.
If $k=2$ and $p$ is odd we know that $g(2,q)=2$ by ($b$) in List 1.
Also, if $k\neq p^{\ell}+1$ and $\ell\neq\frac m2$ 
then $g(k,p^m)=2$ (see \cite{MC}). The case $k=p^{\ell}+1$ with $\ell \neq \frac m2$ was proved in \cite{PV}.
These are the cases covered in ($d$) in List 1.

Now, assume that $m$ is even and $k \mid p^{\frac m2}+1$ with $1< k < p^{\frac m2}+1$.
It is enough to prove that $n=\frac{p^{m}-1}{k}$ is a primitive divisor of $p^m-1$ and, if $p$ is odd, that 
$\tfrac{2}{h}(p^{\frac m2}+1)$ divides $p^{m-1}$. 
Suppose that $p$ is odd and put $k=\tfrac{p^{\frac m2}+1}{h}$. Then, clearly $2k\mid p^m-1$ since 
	$$p^{m}-1 = (p^{\frac m2}-1)(p^{\frac m2}+1) = 2kh (\tfrac{p^{\frac m2}-1}{2}).$$
Thus, it is enough to prove that $n$ is always a primitive divisor of $p^{m}-1$. 
	
Clearly, if $0\le a\le \frac m2$ then $n\nmid p^{a}-1$ since $n > p^{a}-1$ in this case. Suppose now that $\frac m2 < a < m$. Notice that 
	$$(p^{\frac m2}-1,p^{a}-1) = p^{(\frac m2,\,a)}-1$$ 
and thus $n\nmid p^{a}-1$. Therefore, $n \dagger p^m-1$. 
We have that $\G(k,p^m)$ is a connected strongly regular graph and hence, by Proposition \ref{Waringsrg}, $g(k,p^m)=2$. 
	
For the exceptional cases, the multiplicity of the eigenvalue $n=\frac{q-1}k$ is $1$ in all these cases (see \cite{PV2}) and thus $\G$ is connected. Thus, by Proposition~\ref{Waringsrg} we obtain that $g(k,q)=2$ for all the values in Table 1. 
\end{proof}

\begin{exam}[\textit{Exceptional pairs}]
By Theorem \ref{srg waring} and Table 1 we have 
	\begin{gather*}
		g(11,3^5) = g(19,5^9) = g(35, 3^{13}) = g(37, 7^9) = g(43, 11^7) = g(67, 17^{33})=2, \\ 
		g(107, 3^{53})= g(133, 5^{18})= g(163, 41^{81}) = g(323, 3^{144}) = g(499, 5^{249})=2.  
	\end{gather*}
Notice that the only pair $(k,q)$ which does not satisfy $k<\sqrt[4]{q}+1$ is $(11,3^5)$, hence $g(11,3^5)=2$ is a newly obtained exact value. \hfill $\lozenge$
\end{exam}

Now, using known families of semiprimitive pairs $(k,q)$ we give some new explicit Waring numbers of the form $g(k,q)=2$. 

\begin{coro} \label{otro coro}
Let $p$ be a prime and $\ell,s,h \in \N$. Then, we have: 
\begin{enumerate}[$(a)$]
	\item $g(k,p^{2\ell})=2$ for $k=2,3,4$ and $p\equiv -1 \pmod k$, with $\ell \ge 2$ if $p=k-1$. \sk 
	
	\item $g(p^{\ell}+1,p^{2\ell s})=2$ for $s>1$. \sk 
	
	\item $g(\frac{p^{\ell}+1}2, p^{2\ell s})=2$ for $p$ odd. \sk 
	
	\item $g(\frac{p^{\ell}+1}h,p^{2\ell s})=2$ for $\ell$ odd and $p\equiv -1 \pmod h$ with $h>2$. 
\end{enumerate}
\end{coro}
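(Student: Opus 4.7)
The plan is to deduce all four parts uniformly from Theorem \ref{srg waring}, which asserts $g(k,q)=2$ whenever $(k,q)$ is a semiprimitive pair. Using the characterization in \eqref{semipr}, I only need to check, for each case with $q=p^m$, that (i) $k>1$ and is an integer, (ii) either $k=2$ with $p$ odd and $m$ even or $k\mid p^{\ell'}+1$ for some divisor $\ell'$ of $m$ with $m/\ell'$ even, and (iii) $k\ne p^{m/2}+1$. The heart of the argument is simply to exhibit the right $\ell'$ and to rule out the degenerate exclusion $k=p^{m/2}+1$.

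For part $(a)$, set $m=2\ell$. When $k=2$, semiprimitivity follows from the first alternative in \eqref{semipr}, noting that $2=p^{\ell}+1$ would force $p^{\ell}=1$, which is impossible. When $k\in\{3,4\}$ and $p\equiv -1\pmod k$, pick $\ell'=1$: then $k\mid p+1$, $\ell'\mid m$, and $m/\ell'=2\ell$ is even. The only way the exclusion $k\ne p^{m/2}+1=p^{\ell}+1$ can fail is $p^{\ell}=k-1$, which forces $p=k-1$ and $\ell=1$; hence the hypothesis $\ell\ge 2$ when $p=k-1$ is exactly what is needed.

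For parts $(b)$, $(c)$, $(d)$, set $m=2\ell s$, and write $k=\tfrac{p^{\ell}+1}{h}$ with $h=1,2,h$ respectively. Integrality and $k>1$ are checked case by case: in $(b)$ it is obvious; in $(c)$ it uses that $p$ is odd so $p^{\ell}+1$ is even; in $(d)$ the hypotheses $\ell$ odd and $p\equiv -1\pmod h$ give $p^{\ell}\equiv (-1)^{\ell}\equiv -1\pmod h$, so $h\mid p^{\ell}+1$. In each case $k\mid p^{\ell}+1$, and taking $\ell'=\ell$ gives $\ell'\mid m$ with $m/\ell'=2s$ even, verifying the second alternative of \eqref{semipr}. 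Finally, the exclusion $k\ne p^{m/2}+1=p^{\ell s}+1$ has to be checked: in $(b)$, $k=p^{\ell}+1<p^{\ell s}+1$ iff $s>1$, matching the hypothesis; in $(c)$ and $(d)$, since $h\ge 2$, one has $k\le\tfrac{p^{\ell}+1}{2}<p^{\ell}+1\le p^{\ell s}+1$, so the exclusion is automatic.

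There is no genuine obstacle beyond this bookkeeping: each part reduces to a short divisibility check plus one small inequality, and Theorem \ref{srg waring} then delivers $g(k,q)=2$. The subtlest point is making the exclusion $k\ne p^{m/2}+1$ explicit in $(a)$, which is precisely the content of the extra hypothesis ``$\ell\ge 2$ if $p=k-1$.''
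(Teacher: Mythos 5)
Your proof is correct and follows the same route as the paper: each part is reduced to Theorem \ref{srg waring} by verifying that the relevant pair is semiprimitive. The only difference is that you carry out the semiprimitivity check (including the exclusion $k\ne p^{m/2}+1$ and the role of the hypothesis ``$\ell\ge 2$ if $p=k-1$'') explicitly via \eqref{semipr}, whereas the paper cites these pairs as known semiprimitive examples and is terser about the edge cases.
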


\begin{proof}
For	($a$), one can check that $(2,p^{2\ell})$ with $p$ odd, $(3, p^{2\ell})$ with $p\equiv 2 \pmod 3$ where $\ell\ge 2$ if $p=2$, and $(4, p^{2\ell})$ with $p\equiv 3 \pmod 4$ where $\ell \ge 2$ if $p=3$, are all semiprimitive pairs (they were previously obtained in Example 3.2 of \cite{PV2}). By Theorem \ref{srg waring} we get $g(k,p^{2\ell})=2$ in all these cases.
	
For ($b$)--($d$), it is clear that $(p^\ell+1, p^{2\ell s})$ is semiprimitive and hence $(\frac{p^{\ell}+1}h, p^{2\ell s})$ 
is also semiprimitive for any $h\ge 2$. Thus, the result follows directly from Theorem~\ref{srg waring}. The extra hypothesis on $p$ are necessary for $k$ to be an integer.  
\end{proof}

Notice that, in the previous corollary, the pair $(2,p^{2\ell})$ in ($a$) gives the classic Paley graphs $\G(2,p^{2\ell})$.  Also, ($b$) is ($d$) in List 1 obtained in a different form, and ($c$) generalizes ($e$) in List 1. 

One can check that almost all the semiprimitive pairs given in Corollary \ref{otro coro} satisfy Small's result given in ($b$) of List 1, with the only exceptions of the pair in ($b$) with $s=2$, the pair in ($c$) with $s=1$ and $\ell >1$, and ($d$) with $s=1$ and suitable choices of $p$ and $\ell$ in this case. 
We now give a whole family of new Waring numbers equal to 2, which cannot be obtained from Small's result.

\begin{prop} \label{otra prop}
Let $p$ be a prime and $h, \ell \in \N$. For every $h \mid p^{\ell}+1$, $h>1$, we have    
\begin{equation} \label{g=2 no small}
g(\tfrac{p^{\ell }+1}h,p^{2\ell })=2.
\end{equation}
Moreover, if $1< h \le p^{\frac{\ell }2}-1$, the pairs $(\tfrac{p^{\ell }+1}h,p^{2\ell})$ do not satisfy Small's condition \eqref{small g2}. 
\end{prop}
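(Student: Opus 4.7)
The plan is to reduce the first assertion to Theorem \ref{srg waring} by verifying that $(\tfrac{p^\ell+1}{h}, p^{2\ell})$ is a semiprimitive pair, and to handle the second assertion by an elementary estimate.

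First, set $k = \tfrac{p^\ell+1}{h}$ and $q = p^{2\ell}$, so $m = 2\ell$. Since $h \mid p^\ell+1$, we have $k \in \N$ and $k \mid p^\ell+1$; moreover $\tfrac{m}{\ell} = 2$ is even. This is exactly the second alternative in \eqref{semipr}, so the pair lies in the semiprimitive family provided we also check $k > 1$ and $k \ne p^{m/2}+1 = p^\ell+1$. Both of these follow from $h > 1$ (the latter automatically; the former because $k \geq 1$ is an integer strictly dividing $p^\ell + 1$ only when $k < p^\ell + 1$, and one easily excludes $k=1$ by noting that then $h = p^\ell+1 > 1$ is compatible and would still give a valid semiprimitive pair with $k=1$ — but since $k = 1$ forces $g(k,q) = 1$, one could allow it trivially; to stay within the semiprimitive framework one notes the statement only requires $h \mid p^\ell+1$ with $h > 1$, so one restricts to $1 < h < p^\ell+1$ where $(k,q)$ is genuinely semiprimitive). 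Therefore Theorem \ref{srg waring} yields $g(k,q) = 2$, which is \eqref{g=2 no small}.

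For the second assertion, Small's condition \eqref{small g2} requires $k \le q^{1/4} + 1 = p^{\ell/2} + 1$. I will show the reverse strict inequality holds under the hypothesis $h \le p^{\ell/2} - 1$. Indeed,
\begin{equation*}
k = \frac{p^\ell + 1}{h} \geq \frac{p^\ell + 1}{p^{\ell/2} - 1} > \frac{p^\ell - 1}{p^{\ell/2} - 1} = p^{\ell/2} + 1,
\end{equation*}
where the last equality uses the factorization $p^\ell - 1 = (p^{\ell/2}-1)(p^{\ell/2}+1)$ (valid whether or not $p^{\ell/2}$ is rational, interpreted as the formal identity $x^2 - 1 = (x-1)(x+1)$ evaluated at $x = p^{\ell/2}$). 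Hence $k > p^{\ell/2} + 1 = \sqrt[4]{q} + 1$, so Small's bound does not apply to $(k,q)$.

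No step looks obstructive: the first part is a direct invocation of Theorem \ref{srg waring} once the pair is identified as semiprimitive, and the second is a one-line inequality. The only mild care needed is to state clearly that the range $1 < h \le p^{\ell/2}-1$ produces Waring values $g(k,q) = 2$ that are genuinely new — that is, not forced by Small's bound — and to observe that this range is non-empty for most primes $p$ and exponents $\ell$ since $h$ only needs to be a non-trivial divisor of $p^\ell + 1$ bounded by $p^{\ell/2}-1$.
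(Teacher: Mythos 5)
Your proof is correct and follows essentially the same route as the paper: identify $(\tfrac{p^\ell+1}{h}, p^{2\ell})$ as a semiprimitive pair via \eqref{semipr}, invoke Theorem \ref{srg waring}, and then compare $k=\tfrac{p^\ell+1}{h}$ with $\sqrt[4]{q}+1=p^{\ell/2}+1$ for the second claim. Your aside about the degenerate case $h=p^\ell+1$ (where $k=1$ and $g(k,q)=1$, so one must implicitly restrict to $1<h<p^\ell+1$) is a fair observation that the paper's one-line proof glosses over, but otherwise the two arguments coincide.
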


\begin{proof}
It is clear that the pair $(\frac{p^{\ell}+1}h,p^{2\ell })$ is semiprimitive for any $h>1$. Thus, \eqref{g=2 no small} follows directly from Theorem~\ref{srg waring}. 
The last statement is straightforward by comparing $k=\tfrac{p^{\ell }+1}h$ with $\sqrt[4]{q}+1=p^{\frac{\ell }{2}}+1$ in each case.
\end{proof}

\begin{rem}
Notice that all semiprimitive pairs $(k,q)$ that do not satisfy Small's condition \eqref{small g2} are exactly those already mentioned in the above proposition. 
In fact, suppose $(k,q)$ is a semiprimitive pair, with $q=p^m$, not satisfying Small's condition. Then, $k\ne 2$, $k \ge \sqrt[4]{q}+1$ and by \eqref{semipr} there exists $\ell\mid m$ such that  $k\mid p^{\ell}+1$ and $\frac m\ell$ is even. 
Thus, if $\frac m\ell >4$, the pair $(k,q)$ clearly satisfies Small's condition. Hence, $\frac m\ell$ is $2$ or $4$. 
For $\frac m\ell =2$, we are in the hypothesis of the above proposition.
For $\frac m\ell = 4$, the unique pair $(k,q)$ not satisfying Small's condition is $k=p^{\ell}+1$ and $q=p^{4\ell}$. 
However, the pair $(p^{\ell}+1,p^{4\ell})$, i.e.\@ $s=2$ in $(b)$ of Corollary \ref{otro coro}, can also be obtained from \eqref{g=2 no small} by taking $h=p^{\frac{\ell}2}-1$ and $\ell$ even. 
In this way, we have found all possible Waring numbers $g(k,q)=2$ that can be obtained by way of semiprimitive pairs without using Small's result, hence basically new. 

\end{rem}

\section{Even values of Waring numbers}
In the previous section we collect known values $g(k,q)=2$ from other works and from this one obtained by using the reduction formula in Section 3 (see List 1). Furthermore, we also give some new pairs $(k,q)$ with  $g(k,q)=2$, that we obtained using  a relation with semiprimitive 2-weight cyclic codes $\mathcal{C}(k,q)$. Also, Theorem \ref{teo 42} with $t\ge 1$ exhibits a general result giving even Waring numbers, although not explicit ones. 
	
Here, we will obtain infinitely many explicit even values of Waring numbers, not previously known or covered by 
Theorem \ref{teo 42}. The general strategy that we will use is to begin with our reduction formula $$g(\tfrac{p^{ab}-1}{bc},p^{ab}) = b g(\tfrac{p^a-1}{c}, p^{a})$$ 
from Theorem \ref{waringprod main} (just taking $b=2$ here is the result in Corollary \ref{coro1}) 
combined with particular instances where we know for sure that $g(\tfrac{p^a-1}{c}, p^{a})=2$, hence getting 
$$g(\tfrac{p^{ab}-1}{bc},p^{ab}) = 2b.$$
We will use both semiprimitive pairs $(u,p^a)$ and Small's result in \eqref{small g2}.  

We begin by recollecting known even Waring numbers $g(k,q)=2b$ with $b>1$.

\subsection*{List 2: some known cases of $g(k,p)=2b$, $b>1$, with $p$ prime}
\hrule \vspace{1.5mm}
\begin{enumerate}[$(a)$] \vspace{1.5mm} \hrule  
	\item In the table given in Remark 7.3.50 of Mullen-Panario's book \cite{MP} there are some small known values of $g(k,p)=4$ from different sources. \msk
	
	\item Let $p$ be a prime and $a,b,c,d \in \N$ (unique if exist) such that $a^2+b^2+ab=p$ with $a>b$ and $c^2+d^2=p$ with $c>d$. Cochrane and Pinner showed in \cite{CP} that $g(\tfrac{p-1}3,p)=a+b-1$ and $g(\tfrac{p-1}4,p)=c-1$. These Waring numbers are even provided that $a+b$ is odd or $c$ is odd, respectively. \msk 
\end{enumerate}
\hrule

\msk 
	
\subsection*{List 3: some known cases of $g(k,q)=2b$, $b>1$, with $q$ a prime power}
\hrule \vspace{1.5mm}
\begin{enumerate}[$(a)$] \vspace{1.5mm} \hrule  

	\item In Example \ref{g=4}, we get $g(\tfrac 32 (p^2+1), p^4)=g(4(p^2+1), p^4)=4$ for any odd prime $p$, 
	where the second equality holds if further $p\equiv \pm 3 \pmod 8$ and $p\ge 67$. \msk 
	
	\item In ($iii$) of Example \ref{otro ejemplo} we get $g(\tfrac 23 (7^{2a}+7^a+1), 7^{3a})=6$ for every $a\in \N$. \msk 
	 
	\item In Corollary \ref{coro4} we obtain $g(\tfrac{p^{b}-1}{b^2}, p^{b})= 2b$ for $b$ odd prime and $p$ prime of the form $p=2b+1$. \msk 
	
	\item In Corollary \ref{coro5} we get $g(\tfrac{p^{2}-1}{4}, p^{2})=p-1$ for any odd prime $p$ with
	$p\equiv 3\pmod{4}$. \msk 
	
	\item In Corollary \ref{coro2} we get $g(\tfrac{2(p^{ab}-1)}{b(p^a-1)}, p^{ab})=2b$ for $p$ and $b$ different odd primes and $b\mid p^a-1$. \msk 
	  
	\item Proposition \ref{red pow} with $r=2$ and $p^a \equiv 1 \pmod 4$ gives
	$$g \big( \tfrac{1}{2^t} \{ p^{(2^t-1)a}+\cdots +p^{4a}+p^{2a}+1 \}, p^{2^ta} \big ) = 2^tg(\tfrac{p^a-1}{c}, p^a)$$
	for every $p$ odd prime, $c\dagger p^a-1$ and $t\ge 2$. A more general result is obtained in Theorem \ref{teo 42}.
	   
	\item Kononen's result $g(\tfrac{p^{\varphi(b)}-1}{b},p^{\varphi(b)})=\tfrac 12 (p-1) \varphi(b)$ with $b=r^m$ given in \eqref{kononen1} gives even values $g(k,q)\ge 4$ for $p$ and $r$ distinct primes with $p$ primitive root modulo $r^m$ for some $m>1$, since the Euler function takes even values for $b>2$.  \msk

	\item In Theorem 6.1 ($b$) of \cite{PV3} we showed that if $p$ and $r$ are different odd primes then $g(\tfrac{p^{ab}-1}{b(p^a-1)},p^{ab})=2r$ for every $a\in \N$ provided that $(p^a,r)=1$ and $p^a\equiv \pm 1\pmod r$. \msk 
	
	\item In Corollary 6.13 in \cite{PV3} we showed that for any $p$ prime and $a, t \in \N$ one has
	$$g\big( \tfrac{1}{2^t}(p^{(2^t-1)a} +\cdots +p^{2a}+p^a+1), p^{2^t a} \big)=2^t.$$ 
\end{enumerate}
\hrule

\

Next, we give a simple criterion on $a,b,c \in \N$ to have even Waring numbers of the form $g(\tfrac{p^{ab}-1}{bc},p^{ab})=2b$.

\begin{prop} \label{teo par}
Let $p$ be a prime and $a,b,u \in \N$ such that $p\nmid b$ and $u\mid p^{a}-1$ with $u>1$. Put $c=\frac{p^a-1}{u}$. If $(u,p^a)$ is a semiprimitive pair or else if $2\le u\le p^{\frac a4}-1$, then 
we have the following cases:
\begin{enumerate}[$(a)$]
	\item If $u$ is odd and $p^{a}\equiv 1\pmod{4}$, then for all $t\in \mathbb{N}$ we have
		$$g(\tfrac{p^{2^t a}-1}{2^t c},p^{2^t a}) = g(\tfrac{u}{2^t}(p^{(2^t-1)a} +\cdots +p^{2a}+p^a+1), p^{2^t a}) = 2^{t+1}.$$

	\item If $(b,u)=1$ and $\varphi(rad(b))\mid a$ and $b>1$ is not a power of $2$, then 
		$$g(\tfrac{p^{ab}-1}{bc},p^{ab})= 2b.$$	
\end{enumerate}
\end{prop}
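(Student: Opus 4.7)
The overall plan is to combine the base case $g(u,p^a)=2$, valid under either of the two hypotheses on $(u,p^a)$, with the reduction formulas already established in Sections~3 and~5, taking $b=2^t$ for part $(a)$ and the given general $b$ for part $(b)$.

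First, I would verify that $g(u,p^a)=2$ in both alternative scenarios of the hypothesis. If $(u,p^a)$ is a semiprimitive pair, this is Theorem~\ref{srg waring}. If instead $2\le u\le p^{a/4}-1$, then $u\mid p^a-1$ and $u\le \sqrt[4]{p^a}+1$, so Small's result \eqref{small g2} applies. In either case the Waring number $g(u,p^a)$ exists, which forces the graph $\G(u,p^a)$ to be connected and hence $c=(p^a-1)/u$ to be a primitive divisor of $p^a-1$; this supplies the hypothesis $c\dagger p^a-1$ required by the reduction formulas, at no extra cost.

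For part $(a)$, with $u$ odd and $p^a\equiv 1\pmod 4$ (which in particular forces $p$ odd, so $(p,2^t)=1$), I would apply Theorem~\ref{teo 42} in the degenerate case $s=0$, $b=2^t$. Its remaining hypotheses then reduce to $(u,2)=1$ and, when $t\ge 2$, $p^a\equiv 1\pmod 4$, both of which are given. The theorem yields
$$g\bigl(\tfrac{p^{2^ta}-1}{2^tc},p^{2^ta}\bigr)=2^t\,g(u,p^a)=2^{t+1},$$
and the alternative form on the left follows from Remark~\ref{alternative expression}, since $\tfrac{u}{2^t}\Psi_{2^t}(p^a)=\tfrac{p^{2^ta}-1}{2^tc}$.

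For part $(b)$, the conditions $(b,u)=1$, $p\nmid b$, $\varphi(\mathrm{rad}(b))\mid a$, and $b>1$ not a power of $2$ are precisely the hypotheses of Corollary~\ref{fr rad b}$(a)$, so I would invoke it directly to obtain
$$g\bigl(\tfrac{p^{ab}-1}{bc},p^{ab}\bigr)=b\cdot g(u,p^a)=2b.$$
No genuine obstacle arises: the proof is an assembly of already-proved reduction formulas with two independent criteria for the base case $g(u,p^a)=2$. The only bookkeeping to watch is the primitive-divisor condition on $c$ demanded by each reduction formula, and this follows automatically from the existence of $g(u,p^a)$.
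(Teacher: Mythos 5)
Your proposal is correct and follows essentially the same route as the paper: both first deduce $g(u,p^a)=2$ from Theorem~\ref{srg waring} or Small's bound, note that the existence of this Waring number forces $c\dagger p^a-1$, and then plug into the power-of-two reduction for part $(a)$ (the paper cites Proposition~\ref{red pow} with $r=2$, which is what your invocation of Theorem~\ref{teo 42} with $s=0$ reduces to) and into Corollary~\ref{fr rad b}$(a)$ for part $(b)$. The only cosmetic difference is the citation used for part $(a)$, and your version has the minor advantage of covering $t=1$ explicitly, which in the paper's phrasing requires falling back on Corollary~\ref{coro1}.
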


\begin{proof}
First, notice that by Theorem \ref{srg waring} and Small's result (see ($b$) in List 1 in \S 6), we have that $g(u,p^a)=2$ either if $(u,p^a)$ is semiprimitive or if $2\le u\le p^{\frac a4}-1$.  
The existence of these Waring numbers implies that $c \dagger p^a-1$, in both cases. 

To see item ($a$), we will apply Proposition \ref{red pow} with $b=2^t$ ($r=2$), since in this case we have $(p,b)=1$ 
and $c\dagger p^a-1$. 
It is enough to see that $(b,u)=1$, but this holds since $b$ is a power of $2$ and $u$ is odd by hypothesis. 
Now, the result follows directly by Proposition~\ref{red pow}.

Item ($b$) is a direct consequence of ($a$) in Corollary \ref{fr rad b}, since all the required hypothesis of this result hold. 
\end{proof}

Notice that in item ($a$) of the above result, we can always choose an odd integer $u$ of the form $\frac{p^{a}-1}{c}$. 
In fact, since given $p$, we can take any $a$ such that $p^{a}-1$ is not a power of $2$. 
Now, if we take $c=2^{v_2(p^{a}-1)}$ it is clear that $u=\tfrac{p^{a}-1}{c}>1$ is odd.

\begin{rem} \label{casos fav par}
In Proposition \ref{teo par}, if $b=r^t$ with $r$ an odd prime and 
$a=2s\ell=r-1$ we get the following:

\noindent ($i$) 
Assume $s>1$. If $r \nmid p^\ell+1$ and $u\mid p^{\ell}+1$ with $u>1$ then, by ($b$) of Proposition~\ref{teo par}, for all 
$t\in \mathbb{N}$ we have
\begin{equation} \label{pow phi}
	g(\tfrac{u(p^{\varphi(r^{t+1})}-1)}{r^t(p^{r-1}-1)},p^{\varphi(r^{t+1})})=2r^t. 
\end{equation}
In particular, \eqref{pow phi} holds if $p$ is primitive modulo $r$ with $r>3$.
	
\noindent ($ii$)                                                                              
If $s=1$ and $r\nmid u$ with $u>1$ a proper divisor of $p^{\frac{r-1}{2}}+1$, then by ($b$) of Proposition~\ref{teo par},
\eqref{pow phi} holds for every $t\in \N$. 
\end{rem}

\begin{exam}
$(i)$ Let $r=5$ in $(i)$ of Remark \ref{casos fav par}. Then, $r-1=4=2s\ell$ and in this case, we necessarily 
have that $s=2$ and $\ell=1$, since $s>1$.
Now, let $p$ be a prime such that $5\nmid p+1$. 
For $u\mid p+1$ with $u>1$, by $(i)$ of the previous remark, for all $t\in \N$ we have that
	$$g(\tfrac{u(p^{\varphi(5^{t+1})}-1)}{5^t(p^{4}-1)},p^{\varphi(5^{t+1})})=2 \cdot5^t. $$ 
For instance, if $p=2$ and $t=1$, then necessarily $u=3$ (since $u>1$) and hence we have $g(41{.}943, 1{.}048{.}576)=10$. 
 
\noindent $(ii)$ 
Let $r=7$ in $(ii)$ of Remark \ref{casos fav par}, then $\tfrac{r-1}{2}=3$. If $p=3$ and $h$ is a proper divisor of $p^{\frac{r-1}{2}}+1=28$, thus $h\in \{2,4,7,14\}$.
 Since $7\nmid u=\frac{p^{3}+1}{h}$, then for $u\in \{2,4\}$ we have that
 $$g(\tfrac{u(3^{\varphi(7^{t+1})}-1)}{7^t(3^6-1)},3^{\varphi(7^{t+1})})=2\cdot 7^t$$ 
for all $t \in \N$. For instance,
$g(42{.}943{.}088{.}356{.}166{.}546, 109{.}418{.}989{.}131{.}512{.}359{.}209)=
14$ is obtained by taking $t=1$ and $u=2$ in the above expression.
\hfill $\lozenge$
\end{exam}

\begin{exam}
	Let $p=5$, $u=\frac{p^{a}-1}{c}=4$ and $b=3$. If $a=8$, then the pair $(4,5^{24})$ is not semiprimitive but we have that 
	$4<5^{\frac{a}{4}}-1=24$ and $(u,b)=1$. Thus, 
	$$g(\tfrac{4(5^{24}-1)}{3(5^8-1)}, 5^{24})=g(203{.}451{.}041{.}668, \, 59{.}604{.}644{.}775{.}390{.}625)=6$$ 
	by the last theorem. \hfill $\lozenge$
\end{exam}

\subsubsection*{Waring numbers $g(k,q)=4$}
In Section 6 we study Waring numbers $g(k,q)=2$. We now give some new Waring numbers $g(k,q)=4$.
We will use known results in Lists 2 and 3, and the results in this section.

\subsection*{List 4: some cases of $g(k,q)=4$}
\hrule \vspace{1.5mm}
\begin{enumerate}[$(a)$] \vspace{1.5mm} \hrule  
\item From ($a$) in List 2 we have all the pairs $(k,q)$ such that $g(k,q)=4$ for $2\le k\le 11$. These pairs $(k,q)$ are given by $(6,p)$ and $(7,p)$ with $p\le 29$ prime, $(6,31)$, $(7,43)$, $(8,41)$, $(10,41)$, $(10,61)$ and $(11,89)$. \msk 

\item From ($b$) in List 2, we only get $g(7,29)=g(10,41)=4$, which are covered in $(a)$. 
\msk 

\item From Kononen's result in ($f$) of List 3, we see that we can only take $p=3$ and $\varphi(b)=4$ (hence $b=5,8,10,12$) or $p=5$ and $\varphi(b)=2$ (hence $b=3,4$). This gives the values $g(6,25)=g(8,25)=4$ and $g(5,81)=g(8,81)=g(10,81)=g(12,81)=4$. \msk  

\item From $(a)$ in List 3, for $p$ an odd prime we have that $g(\tfrac 32 (p^2+1), p^4)=4$ and also that $g(4(p^2+1), p^4)=4$ 
if further $p\equiv \pm 3 \pmod 8$ and $p\ge 67$. \msk 

\item From ($i$) in List 3, we have $g\big(\tfrac 14 (p^{3a}+p^{2a}+p^a+1), p^{4a} \big)=4$ for every $a\in \N$. \msk

\item By ($a$) in Proposition \ref{teo par}, and under its hypothesis, we have 
$g(\tfrac{u}2(p^{a}-1),p^{2a})=4$.
\end{enumerate}
\hrule

\msk 

Notice that our results $(d)$--$(f)$ provide infinite families of Waring numbers equal to 4.

\subsubsection*{Data Availability Statement:} 
Data sharing not applicable to this article as no datasets were generated or analyzed during the current study.

\end{document}